\DeclarePairedDelimiterX\abs[1]\lvert\rvert{
	\ifblank{#1}{\:\cdot\:}{#1}
}
\DeclarePairedDelimiterX\norm[1]\lVert\rVert{ 
	\ifblank{#1}{\:\cdot\:}{#1}
}
\DeclarePairedDelimiterX{\inner}[2]{\langle}{\rangle}{ 
	\ifblank{#1}{\:\cdot\:}{#1},\ifblank{#2}{\:\cdot\:}{#2}
}
\providecommand\given{}
\DeclarePairedDelimiterX\Set[1]{\lbrace}{\rbrace}{
	\renewcommand\given{\SetSymbol[\delimsize]}
	#1
}
\DeclarePairedDelimiterXPP\Prob[1]{\mathbb{P}}(){}{
	\renewcommand\given{\nonscript\:\delimsize\vert\nonscript\:
		\mathopen{}}
\DeclarePairedDelimiterXPP\Var[1]{\text{Var}}(){}{
	\renewcommand\given{\nonscript\:\delimsize\vert\nonscript\:
		\mathopen{}}
\DeclarePairedDelimiterXPP\Mean[1]{\mathbb{E}}[]{}{
	\renewcommand\given{\nonscript\:\delimsize\vert\nonscript\:
		\mathopen{}}
	#1}
\newlist{exercise}{enumerate}{3}
\setlist[exercise]{wide, labelwidth=!, labelindent=0pt, label=\textbf{(\alph*)}}
\theoremstyle{plain}
\newtheorem{theorem}{Theorem}
\newtheorem{conjecture}[theorem]{Conjecture}
\newtheorem{corollary}[theorem]{Corollary}
\newtheorem{proposition}[theorem]{Proposition}
\newtheorem{lemma}[theorem]{Lemma}
\newtheorem{assumption}{Assumption}
\newtheorem{remark}[theorem]{Remark}
\newtheorem{example}[theorem]{Example}
\newcommand\N{\mathbb{N}}
\newcommand\R{\mathbb{R}}
\DeclarePairedDelimiterX\lrangle[1]\langle\rangle{
	\ifblank{#1}{\:\cdot\:}{#1}
}
\newcommand\idd{\,\mathrm{d}} 
\newcommand\dd{\mathrm{d}} 
\newcommand\ff{\mathcal{F}}
\newcommand\cp{\mathcal{C}}
\newcommand\D{\mathcal{D}}
\newcommand{\n}{{(n)}}
\newcommand{\p}{\mathbb P}
\newcommand{\convd}{\overset{d}{\rightarrow}}
\newcommand{\cip}{\overset{\p}{\rightarrow}}
\newcommand{\ind}[1]{\mbox{\rm\large 1}_{{#1}}}
\begin{document}
\title[L\'evy processes conditioned to stay in a half-space]{L\'evy processes conditioned to stay in a half-space with applications to directional extremes}
\author[J.\ Ivanovs]{Jevgenijs Ivanovs}
\author[J.\ D.\ Thøstesen]{Jakob D.\ Thøstesen}
\begin{abstract}
This paper provides a multivariate extension of Bertoin's pathwise construction of a L\'evy process conditioned to stay positive/negative.
Thus obtained processes conditioned to stay in half-spaces are closely related to the original process on a compact time interval seen from its directional extremal points.
In the case of a correlated Brownian motion the law of the conditioned process is obtained by a linear transformation of a standard Brownian motion and an independent Bessel-3 process.
Further motivation is provided by a limit theorem corresponding to zooming in on a L\'evy process with a Brownian part at the point of its directional infimum.
Applications to zooming in at the point furthest from the origin are envisaged.
\end{abstract}
\keywords{Conditioning to stay positive; directional extremes; exchangeability; local behavior; Sparre-Andersen identity}
\subjclass[2020]{60G51, 60G17, 60F17}
\maketitle

\section{Introduction}

There are multiple examples of conditioning a univariate L\'evy process in some limiting sense, which alternatively can be described by Doob $h$-transforms, see~\cite{bertoin_splitting,doring_watson_weissmann,doring_weissmann} and references therein. Most often the focus is on establishing properties directly related to these conditional processes.
The case of conditioning to stay positive/negative is special in the sense that it is intimately related to the post- and pre-infimum processes~\cite{bertoin_splitting}, leading to various important applications.
Further links to path decomposition results can be found in~\cite{duquesne}. 

Local behavior of a univariate L\'evy process at its extremal points is studied in~\cite{iva_zooming}, see also~\cite{bertoin_splitting} for a self-similar case and~\cite{AGP} for a linear Brownian motion.
It is shown that zooming in at the point of infimum results in a pair of processes obtained from the underlying self-similar L\'evy process conditioned to stay positive and negative.
Further applications of this theory in the setting of high-frequency statistics include estimation of threshold exceedance in~\cite{bis_iva} and optimal estimation of extremes in~\cite{iva_pod}. 
Bertoin's pathwise construction of conditioned processes in~\cite{bertoin_splitting} plays a fundamental role in these works. For yet another application see~\cite{asmussen2018discretization} studying the discretization error in the two-sided Skorokhod reflection map.

In this work we extend Bertoin's construction to the multivariate setting to define a L\'evy process conditioned to stay in a half-space specified by some normal vector $\eta\neq 0$, see \S\ref{sec:representation}. Importantly, the link to post- and pre-extremum processes is preserved, where extrema are understood with respect to the direction~$\eta$. 
Furthermore, in \S\ref{sec:application} we establish an associated invariance principle which, in particular, yields a limit result when zooming in on a L\'evy process at the point of directional extremum. This is achieved via a short and direct argument relying on the path-wise construction. 
Applications of this result to high frequency statistics and the study of discretization errors in problems related to directional extrema and exceedance are anticipated.


In the multivariate case we have a continuum of possible directions, and the effect of linear transformations is studied in \S\ref{sec:linear}. It is shown that conditioning with respect to any direction $\eta$ can be reduced to, say, conditioning an appropriately rotated process so that its first component stays positive. 
Furthermore, we provide a simple expression for the conditioned correlated Brownian motion in terms of a certain linear transformation of independent standard Brownian motions and a Bessel-3 process.
In \S\ref{sec:law} we present the semigroup of the conditioned process in the general case, which turns out to have an intuitive structure. In \S\ref{sec:Feller} we utilize the arguments and insights from~\cite{chaumont_doney} to establish some important properties of the conditioned process. This leads to a natural definition of the respective Feller process started from an arbitrary point in the closed half-space.

We have attempted to present the multivariate theory in a streamlined and concise form, while emphasizing the main novelties stemming from the multivariate setting. Finally, in \S\ref{sec:further_applications} we state a conjecture related to the local behavior at the point furthest from the origin, which hints at even greater application potential of the multivariate theory.

\section{Preliminaries}
Fix an integer $d\geq 1$ and let $\D$ denote the space of càdlàg functions $\omega:\R\to\R^d\cup\Set{\dagger}$, where $\dagger$ is an isolated absorbing state.
As usual we equip the path space $\D$ with the Skorokhod topology and let $\ff$ denote the Borel $\sigma$-field. Furthermore we denote the coordinate process by $X=(X_t)$ and  its natural completed filtration by $(\ff_t)$.
Unless stated otherwise we work with a subclass of processes satisfying $X_t=0$ for $t<0$, and let $\zeta:=\inf\Set{t\geq 0 \given X_t=\dagger}\in[0,\infty]$ be the lifetime. 

\subsection{Directional infimum}
We shall consider a fixed vector $\eta\in\R^d\setminus\Set{0}$ and the respective open and closed half-spaces 
\[S:=\Set{x\in\R^d\given\inner{x}{\eta}>0}, \qquad \overline S:=\Set{x\in\R^d\given\inner{x}{\eta}\geq 0};\]
for ease of notation we omit $\eta$ here and in the following.
The \emph{projected process} is defined by
\[Z_t:=\inner{X_t}{\eta}\in\R\cup\{\dagger\},\] 
where $\inner{\dagger}{\eta}=\dagger$ by convention.

Assume for a moment that the lifetime is finite and strictly positive, $\zeta\in(0,\infty)$.
Consider the directional infimum $\underline Z:=\inf\Set{Z_t\given t\geq 0}$ and the respective (last) time
\[\tau:=\sup\Set{t\geq 0\given Z_t\wedge Z_{t-}=\underline Z}\in[0,\zeta],\]
where $z\wedge\dagger=z$.
Letting $\underline X:=X_\tau\ind{\Set{Z_\tau\leq Z_{\tau-}}}+X_{\tau-}\ind{\Set{Z_\tau> Z_{\tau-}}}$ be the position of $X$ at the time of directional infimum, 
we define the (directional) post-infimum and reversed pre-infimum processes by
\begin{align*}
	\underrightarrow{X}_t:=\begin{cases}
		X_{\tau+t}-\underline X &\text{if }0\leq t<\zeta-\tau \\
		\dagger &\text{if }t\geq \zeta-\tau,
	\end{cases} \qquad
	\underleftarrow{X}_t:=\begin{cases}
		X_{(\tau-t)-}-\underline X &\text{if }0\leq t<\tau \\
		\dagger &\text{if }t\geq \tau,
	\end{cases}
\end{align*}
see also Figure~\ref{fig:illustration} for a schematic illustration.
 According to the above convention we set $\underrightarrow{X}_t=\underleftarrow{X}_t=0$ for $t<0$.
Note that $\underrightarrow{X}_t=\dagger$ for $t\geq 0$ if $\tau=\zeta$, and similarly $\underleftarrow{X}_t=\dagger$ for $t\geq 0$ if $\tau=0$.
The pair of processes $(\underleftarrow{X},\underrightarrow{X})$ is a representation of the process $X$ seen from the time-space point $(\tau,\underline X)$.
Alternatively, we could have defined a proper two-sided process. 

\subsection{L\'evy processes}
Throughout this paper $\p$ will be a probability measure on $(\D,\ff)$ such that $X$ is a $d$-dimensional Lévy process with infinite lifetime.
We write $X:\p$ when there is a need to specify the law of $X$ explicitly.
For a deterministic $T\in(0,\infty)$ the process $X:\p$ sent to $\dagger$ at  $T$ is denoted by $X:\p^T$, and in particular $\p^T(\zeta=T)=1$.
By default we work with $\p$ if no law is mentioned explicitly.
The L\'evy measure of $X$ is denoted by $\Pi(\dd x)$.
Additional notation will be introduced in the following when required.  

Throughout this paper we assume (the excluded case is simple but somewhat cumbersome):
\begin{assumption}\label{as:notCPP}
	For the chosen direction $\eta$ the projected process $Z$ is not a compound Poisson process. 
\end{assumption}
Under Assumption~\eqref{as:notCPP} it is well known that the process $Z:\p^T$ achieves its infimum once only (at the time~$\tau$) a.s.
This means that $\underrightarrow{X}$ and $\underleftarrow{X}$ are inside the open half-space~$S$ for strictly positive times preceding~$\zeta$.
Our next result shows that $X$ cannot jump perpendicularly to~$\eta$ at~$\tau$, see Figure~\ref{fig:illustration}, 
and so $\underrightarrow{X}_0$ and $\underleftarrow{X}_0$ are either at the origin or inside $S\cup\Set{\dagger}$. For the definition of regular/irregular points we refer to~\cite[p.\ 104]{bertoin}.
\begin{lemma}\label{lem:trichotomy}
	The following trichotomy holds with respect to the projected process $Z:\p$.
	\begin{itemize}
		\item[$(\updownarrow)$] If $0$ is regular for $(-\infty,0)$ and for $(0,\infty)$ then $\underleftarrow{X}_0=\underrightarrow{X}_0=0$\quad $\p^T$-a.s.
		\item[$(\uparrow)$] If $0$ is irregular for $(-\infty,0)$ then $\underleftarrow{X}_0\in S\cup\Set{\dagger}$ and $\underrightarrow{X}_0=0$\quad $\p^T$-a.s.
		\item[$(\downarrow)$] If $0$ is irregular for $(0,\infty)$ then $\underleftarrow{X}_0=0$ and $\underrightarrow{X}_0\in S\cup\Set{\dagger}$\quad $\p^T$-a.s.
	\end{itemize}
\end{lemma}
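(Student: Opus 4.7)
My approach is twofold. First I would prove the auxiliary fact anticipated by the text of the lemma, namely that almost surely $X$ has no jump perpendicular to $\eta$ at the infimum time $\tau$. Second, I would use the classical Millar--Bertoin description of the pre- and post-infimum behaviour of the univariate projected process $Z$ to determine which of $Z_\tau$, $Z_{\tau-}$ equals $\underline Z$ in each regularity case, and then combine this with the auxiliary fact to read off the values of $\underrightarrow X_0$ and $\underleftarrow X_0$.

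For the auxiliary fact I would split the L\'evy--It\^o decomposition of $X$ along the hyperplane $\eta^{\perp}=\Set{x\in\R^d\given\inner{x}{\eta}=0}$. Concretely, write $X=\tilde X+J$, where $J$ is the (suitably compensated) L\'evy process built from the piece of the Poisson random measure of jumps of $X$ supported on $\eta^{\perp}$, and $\tilde X$ carries the Gaussian part, the drift, and the remaining jumps (those with nonzero projection on~$\eta$). The two Poisson random measures have disjoint supports and are therefore independent, so $J$ and $\tilde X$ are independent L\'evy processes. Since $\inner{J_t}{\eta}\equiv 0$, one has $Z=\inner{X}{\eta}=\inner{\tilde X}{\eta}$, so $\tau$ is a measurable functional of $\tilde X$ alone and in particular is independent of $J$. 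For every deterministic $t$ the L\'evy process $J$ satisfies $\Delta J_t=0$ a.s.\ by stochastic continuity, so conditioning on $\tau$ and using Fubini yields $\Delta J_\tau=0$ a.s.; equivalently, $X$ a.s.\ has no purely perpendicular jump at $\tau$.

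For the regularity step, under Assumption~\ref{as:notCPP} the projection $Z$ is a non-compound-Poisson univariate L\'evy process, and $0$ is regular for at least one of the half-lines. The classical analysis of the pre- and post-infimum processes (see \cite{bertoin_splitting} and the entrance characterization in \cite{chaumont_doney}) then gives
\[
Z_\tau=\underline Z \text{ a.s.}\iff 0\text{ is regular for }(0,\infty),\qquad Z_{\tau-}=\underline Z\text{ a.s.}\iff 0\text{ is regular for }(-\infty,0).
\]
Combining with Step~1 now yields all three cases. In case $(\updownarrow)$ both equalities hold and $Z$ does not jump at $\tau$; together with the absence of perpendicular jumps this forces $X_{\tau-}=X_\tau=\underline X$, so $\underrightarrow X_0=\underleftarrow X_0=0$. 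In case $(\uparrow)$ only the first equality holds and $Z$ jumps strictly downward at $\tau$; the definition gives $\underline X=X_\tau$, so $\underrightarrow X_0=0$, while $\underleftarrow X_0=X_{\tau-}-X_\tau$ has $\inner{\underleftarrow X_0}{\eta}=Z_{\tau-}-Z_\tau>0$ and thus lies in $S$ (the $\dagger$ alternative handles the degenerate boundary case $\tau=0$). Case $(\downarrow)$ is symmetric.

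The main technical hurdle is Step~1, where one must (i) decompose $X$ into the perpendicular-jump L\'evy process $J$ and the remainder $\tilde X$ with correct compensation of small jumps, (ii) verify their independence via disjoint supports of the associated Poisson random measures together with independence of the Gaussian and drift parts, and (iii) argue measurably that $\tau$ depends only on $\tilde X$. Once the decomposition is in place the Fubini step is routine, and the remainder of the proof is essentially bookkeeping driven by the univariate regularity dichotomy.
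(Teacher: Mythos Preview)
Your proposal is correct, and your Step~1 argument via the L\'evy--It\^o split $X=\tilde X+J$ is sound: the restriction of the jump measure to $\eta^\perp$ is independent of the rest, $\tau$ is measurable with respect to $\tilde X$ alone since $Z=\inner{\tilde X}{\eta}$, and the Fubini step then gives $\Delta J_\tau=0$ a.s. Combined with the univariate classification of how $Z$ attains its infimum, the three cases follow as you describe.

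The paper, however, takes a different and shorter route for the crucial case $(\updownarrow)$. Rather than isolating the perpendicular jumps via a decomposition, it enumerates \emph{all} jump times of $X$ as a sequence of stopping times $(T_n)$ and applies the strong Markov property at each: regularity of $Z$ for $(-\infty,0)$ forces $\p^T(Z_{T_n}=\underline Z)=0$, and the time-reversal argument with regularity for $(0,\infty)$ gives $\p^T(Z_{T_n-}=\underline Z)=0$. Hence no jump time of $X$ can be~$\tau$, and $X$ is continuous at~$\tau$. This bypasses the L\'evy--It\^o split and the independence/measurability verifications entirely. Your approach, by contrast, cleanly separates the genuinely multivariate issue (perpendicular jumps) from the univariate one (regularity of $Z$), at the cost of setting up the decomposition; it would transfer more readily to settings where $\tau$ is some other functional of $Z$ for which the strong Markov argument at jump times is less immediate.
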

\begin{proof}
The latter two statements are easy and follow from the univariate case. 
Suppose instead that $0$ is regular for both half-lines, in which case $\p^T(\tau\in\Set{0,T})=0$. We may choose a sequence $(T_n)$ of stopping times, ranging over all jump epochs of $X$. 
Applying the strong Markov property yields $\p^T(Z_{T_n}=\underline{Z})=0$ since $Z$ is regular for $(-\infty,0)$. Thus, if $X$ jumps at $\tau$ then $Z_\tau>\underline{Z}$ $\p^T$-a.s. 
The same argument applied to the time reversed process $(X_T-X_{(T-t)-})$ having the law of $X:\p^T$ shows that $Z_{\tau-}>\underline{Z}$  if $X$ jumps at $\tau$ $\p^T$-a.s.; here we employ regularity for $(0,\infty)$.
We conclude that $X$ is $\p^T$-a.s.\ continuous at $\tau$ and this proves the statement.
\end{proof}

\begin{figure}[h!]
\begin{centering}
\begin{tikzpicture}
\draw[gray] (-2,1) -- (2,-1);
\draw[gray,->] (0,0) -- (0.5,1) node[anchor=east]{$\eta$};
\draw[thick] plot [smooth, tension=1] coordinates { (-0.9,2) (-0.8,1.6)  (0.2,1.5) (-0.3,1.9) (-0.5,1)};
\draw[dashed,thick] (-0.5,1)--(0,0);
\draw[thick,->] plot [smooth, tension=1] coordinates {(0,0)  (1,0.5) (1.1,-0.2) (0.7,0.6) (1.3,1.1)};
\filldraw (0,0) circle (2pt);
\draw(-0.5,1) circle (2pt);
\draw (0,0) node[anchor=north east]{$(0,0)$};
\end{tikzpicture}
\qquad
\begin{tikzpicture}
\draw[gray] (-2,1) -- (2,-1);
\draw[gray,->] (0,0) -- (0.5,1) node[anchor=west]{$\eta$};
\draw[thick,xshift=0.5cm,yshift=-1cm] plot [smooth, tension=1] coordinates { (-0.9,2) (-0.8,1.6)  (0.2,1.5) (-0.3,1.9) (-0.5,1)};
\draw[thick,->,xshift=0.7cm,yshift=-0.1cm] plot [smooth, tension=1] coordinates {(0,0)  (1,0.5) (1.1,-0.2) (0.7,0.6) (1.3,1.1)};
\draw[dashed,thick] (0,0)--(0.7,-0.1);
\draw (0,0) circle (2pt);
\filldraw(0.7,-0.1) circle (2pt);
\draw (0,0) node[anchor=north east]{$(0,0)$};
\end{tikzpicture}
\qquad
\begin{tikzpicture}
\draw[gray] (-2,1) -- (2,-1);
\draw[gray,->,xshift=1cm,yshift=-0.5cm] (0,0) -- (0.5,1) node[anchor=west]{$\eta$};
\draw[thick,xshift=0.5cm,yshift=-1cm] plot [smooth, tension=1] coordinates { (-0.9,2) (-0.8,1.6)  (0.2,1.5) (-0.3,1.9) (-0.5,1)};
\draw[thick,->,xshift=1cm,yshift=-0.5cm] plot [smooth, tension=1] coordinates {(0,0)  (1,0.5) (1.1,-0.2) (0.7,0.6) (1.3,1.1)};
\draw[dashed,thick] (0,0)--(1,-0.5);
\draw (0,0) circle (2pt);
\filldraw(1,-0.5) circle (2pt);
\end{tikzpicture}
\end{centering}
\caption{Schematic illustration of the process in $\R^2$ seen from its directional infimum:  $(\uparrow)$ jump into $\eta$-minimum (left), $(\downarrow)$ jump out of $\eta$-infimum (center) and an impossible case (right).}
\label{fig:illustration}
\end{figure}
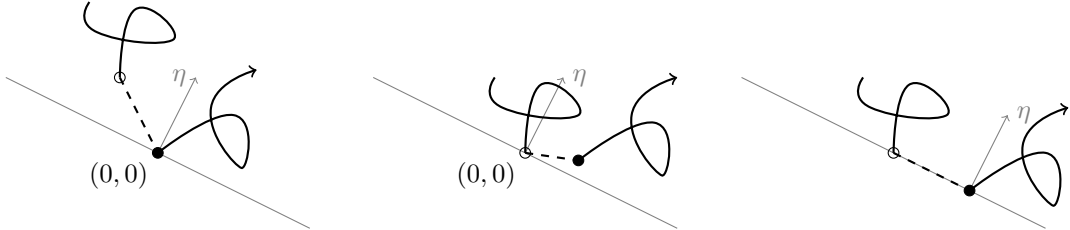


\section{The fundamental representation and the limit object}\label{sec:representation} 
We start with a fundamental representation of the law of the pair $(\underleftarrow{X},\underrightarrow{X}):\p^T$, which extends a univariate construction by Bertoin~\cite{bertoin_splitting} based, in turn, on an implicit identity for random walks~appearing in~\cite[Lem.\ XII.8.3]{feller}. 
Our representation is in terms of time-changed stochastic integrals, since the construction in~\cite{bertoin_splitting} in terms of the local time at 0 does not have a simple analogue in the multivariate setting.

Consider the non-killed process $X$ and let $\tilde X_t:=X_{(-t)-}$ be its time-reversal, which is a process with stationary and independent increments for negative times.
Define two $(\ff_t)$-adapted càdlàg processes $Y^\pm$ by
\begin{equation*}
	Y_t^+:=-\int_{[-t,0]}\ind{\Set{\inner{\tilde X_{s-}}{\eta}>0}}\idd\tilde X_s,\quad Y_t^-:=-\int_{[-t,0]}\ind{\Set{\inner{\tilde X_{s-}}{\eta}\leq0}}\idd\tilde X_s\quad\text{for }t\geq 0,
\end{equation*}
and $Y^\pm_\infty:=\dagger$. These stochastic integrals can be understood intuitively as $\int_0^t\ind{\Set{\inner{X_s}{\eta}>0}}\idd X_s$ and $\int_0^t\ind{\Set{\inner{X_s}{\eta}\leq0}}\idd X_s$, where the integrands are not predictable.

The cumulative times when $X$ is and is not in $S$ are denoted by $A^+$ and $A^-$ respectively. That is,
\begin{equation*}
	A_t^+:=\int_0^t\ind{\Set{\inner{X_s}{\eta}>0}}\idd s,\quad A_t^-:=\int_0^t\ind{\Set{\inner{X_s}{\eta}\leq0}}\idd s\quad\text{for }t\geq0.
\end{equation*}
Consider now the right-continuous inverses $\alpha^\pm_t:=\inf\Set{s\geq0\given A^\pm_s>t}$ of $A^\pm$, and define
\begin{equation*}
	X^\uparrow_t:=Y^+_{\alpha^+_t},\quad X^\downarrow_t:=Y^-_{\alpha^-_t}\quad\text{for }t\geq0.
\end{equation*}
The processes $X^\uparrow$ and $X^\downarrow$ under $\p^T$ are obtained by killing $X^\uparrow$ and $X^\downarrow$ at the times $A^+_T$ and $A^-_T$ under $\p$, respectively.
The times $A^\pm_T$ are non-decreasing in $T$, which results in longer lifetimes $\zeta^\uparrow$ and $\zeta^\downarrow$ for larger time horizons~$T$.

\begin{theorem}\label{thm:representation}
	Under $\p^T$ for $T\in(0,\infty)$ there is the following identity in law:
	\[(\underleftarrow{X},\underrightarrow{X})\stackrel{d}{=}(-X^\downarrow,X^\uparrow),\]
	where $-\dagger=\dagger$ by convention.
\end{theorem}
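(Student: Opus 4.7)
My plan is to reduce to an identity for random walks, prove it via an exchangeability/combinatorial argument extending~\cite[Lem.\ XII.8.3]{feller}, and then pass to the continuum limit. For $n\geq 1$, set $\xi_k^{(n)}:=X_{kT/n}-X_{(k-1)T/n}$ and $S^{(n)}_k=\xi_1^{(n)}+\dots+\xi_k^{(n)}$; the increments are i.i.d.\ in $\R^d$ by the L\'evy property. Let $Z^{(n)}_k=\inner{S^{(n)}_k}{\eta}$ and let $\tau^{(n)}$ be the last index at which $Z^{(n)}$ is minimized. Define the discrete counterparts $(\underleftarrow{S^{(n)}},\underrightarrow{S^{(n)}})$ by reversal/truncation at $\tau^{(n)}$, and $(S^{(n),\downarrow},S^{(n),\uparrow})$ by concatenating, in original order, the increments $\xi_k^{(n)}$ for which $Z^{(n)}_{k-1}\leq 0$ and $Z^{(n)}_{k-1}>0$ respectively. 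The first key step is the discrete identity in law
\[(\underleftarrow{S^{(n)}},\underrightarrow{S^{(n)}})\stackrel{d}{=}(-S^{(n),\downarrow},S^{(n),\uparrow}),\]
the direct $\R^d$-version of Feller's identity. Its proof proceeds by the same bijective rearrangement of permutations of i.i.d.\ increments used by Bertoin in~\cite{bertoin_splitting}; only the ``sign'' criterion changes, from the scalar to $\inner{\cdot}{\eta}$.

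The second step is to pass $n\to\infty$. Identifying $S^{(n)}$ with its càdlàg step-function embedding, $S^{(n)}\to X$ in the Skorokhod topology a.s. On the left-hand side, Assumption~\ref{as:notCPP} ensures the directional infimum is attained at a single time~$\tau$ under $\p^T$, and Lemma~\ref{lem:trichotomy} prevents $X$ from jumping perpendicularly to~$\eta$ at~$\tau$; together these yield almost-sure continuity of the pre-/post-infimum decomposition at $\p^T$-typical paths, so $(\underleftarrow{S^{(n)}},\underrightarrow{S^{(n)}})\to(\underleftarrow X,\underrightarrow X)$ a.s. On the right-hand side, Assumption~\ref{as:notCPP} implies that $\Set{t:Z_t=0}$ has zero Lebesgue measure, so the discrete occupation counts $(T/n)\#\Set{k\leq\lfloor nt/T\rfloor:Z^{(n)}_{k-1}>0}$ converge uniformly to $A^+_t$ (and symmetrically for $A^-$), and the corresponding partial sums converge to $Y^+,Y^-$; inverting the time-changes yields $(S^{(n),\uparrow},S^{(n),\downarrow})\to(X^\uparrow,X^\downarrow)$ a.s. The identity then passes to the limit.

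I expect the technical heart of the argument to be this convergence of time-changed stochastic integrals in the Skorokhod topology. The integrand $\ind{\Set{\inner{X_s}{\eta}>0}}$ is not a continuous path-functional, and the inverses $\alpha^\pm$ can misbehave where $A^\pm$ is locally flat. Under Assumption~\ref{as:notCPP} these difficulties are controlled: $A^\pm$ are strictly increasing on their essential supports, $\alpha^\pm$ are continuous at the relevant arguments a.s., and discretization errors caused by jumps straddling the hyperplane $\inner{\cdot}{\eta}=0$ vanish in the limit. Stitching these estimates together near the critical time~$\tau$, where the pre- and post-infimum decompositions transition, will require the most care.
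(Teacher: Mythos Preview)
Your overall strategy---reduce to a combinatorial identity for the discretized walk via exchangeability, then pass to the continuum limit---is exactly the route the paper takes. However, the discrete identity you state is \emph{false} as written: the criterion for routing the increment $\xi_k^{(n)}$ to $S^{(n),\uparrow}$ must be $Z^{(n)}_{k}>0$ (the position \emph{after} the step), not $Z^{(n)}_{k-1}>0$. A two-step counterexample in one dimension with $\eta=1$ suffices: take $\xi_1=-1$, $\xi_2=2$. For either permutation $Z_0=0\leq 0$, so under your rule the first increment always goes to $S^{(n),\downarrow}$; averaging over the two permutations, $S^{(n),\uparrow}$ has lifetime $0$ with probability $1/2$, whereas $\underrightarrow{S^{(n)}}$ has lifetime at least $1$ in both permutations since the endpoint $S_2=1>0$ is never the minimum. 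The correct endpoint criterion is not a cosmetic choice: it is precisely why the paper defines $Y^+$ through the time-reversed integral, so that the effective integrand is the non-predictable $\ind{\{\inner{X_s}{\eta}>0\}}$ rather than $\ind{\{\inner{X_{s-}}{\eta}>0\}}$---a point the paper flags explicitly just after introducing~$Y^\pm$.

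Once you replace $Z^{(n)}_{k-1}$ by $Z^{(n)}_k$, the remainder of your outline is sound and matches the paper. One methodological remark: the paper sidesteps the Skorokhod-convergence issues you anticipate in your last paragraph by testing against integrals $\int_0^T\inner{f(s)}{\cdot_s}\,\dd s$ for continuous $f$, rather than proving convergence of the processes themselves in the Skorokhod topology; this is enough to identify the law and lightens the technical burden near the discontinuities of~$\alpha^\pm$.
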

\begin{proof}
The proof is based on a random walk approximation and exchangeability of increments as in the one-dimensional cases of~\cite{bertoin_splitting}; it is deferred to \S\ref{sec:representation_proof}.
\end{proof}

Importantly, the above construction of the pair $(X^\downarrow,X^\uparrow):\p^T$ depends on $T$ via the killing times $A^\pm_T$ alone.
In particular, for $0<T_1<T_2$ the paths of $X^\uparrow:\p^{T_1}$ and $X^\uparrow:\p^{T_2}$ coincide up to the time $A^+_{T_1}$ when the former is sent to~$\dagger$, whereas the latter is killed at $A^+_{T_2}\geq A^+_{T_1}$.
It is convenient to think of growing the paths as $T$ increases. As $T\to\infty$ we obtain $(X^\downarrow,X^\uparrow)$.

\begin{corollary}\label{cor:object}
It holds that 
\[(\underleftarrow{X},\underrightarrow{X}):\p^T\convd (-X^\downarrow,X^\uparrow),\qquad T\to\infty.\]
\end{corollary}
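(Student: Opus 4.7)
My plan is to leverage Theorem~\ref{thm:representation} directly: it already asserts the distributional identity at each fixed~$T$, so the corollary follows as soon as one shows that the right-hand side $(-X^\downarrow,X^\uparrow):\p^T$ converges as $T\to\infty$ to $(-X^\downarrow,X^\uparrow)$ understood as the non-killed object built in this section under $\p$. I would in fact prove the stronger pathwise statement, namely almost-sure Skorokhod convergence on the sample space equipped with $\p$, and then invoke convergence in distribution.

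For the almost-sure step, I would use the observation recorded in the paragraph following Theorem~\ref{thm:representation}: realized under $\p$, the process $X^\uparrow:\p^T$ is nothing but $X^\uparrow:\p$ sent to $\dagger$ at the time $A^+_T$, and analogously for $X^\downarrow:\p^T$ with killing time $A^-_T$. The map $T\mapsto A^\pm_T$ is non-decreasing, and by monotone convergence $A^\pm_T\uparrow A^\pm_\infty$ as $T\to\infty$; by construction $A^\pm_\infty$ equals the full lifetime $\zeta^{\uparrow/\downarrow}$ of $X^\uparrow$ and $X^\downarrow$ under $\p$. Thus the killing times grow up to the lifetimes of the putative limit processes.

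Consequently the paths of $X^\uparrow:\p^T$ and $X^\uparrow:\p$ coincide on the increasing initial intervals $[0,A^+_T)$ and both equal $\dagger$ beyond their respective lifetimes; the same holds for $X^\downarrow$. Because $\dagger$ is isolated, such monotone pathwise agreement yields Skorokhod convergence in $\D$ almost surely, with no continuity requirement on the limit path at the lifetime. The joint statement follows because both coordinates are deterministic functionals of the single underlying path of $X$. The only point that requires a moment's reflection is Skorokhod continuity of the killing operation under monotonically increasing killing times, which is standard and depends essentially on $\dagger$ being an isolated absorbing state; this is the mildest obstacle in an otherwise essentially automatic argument once Theorem~\ref{thm:representation} is in hand.
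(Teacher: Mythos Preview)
Your proposal is correct and is exactly the argument the paper has in mind: the corollary is stated without a separate proof, relying on the observation just before it that $(X^\downarrow,X^\uparrow):\p^T$ depends on $T$ only through the killing times $A^\pm_T$, which increase to the lifetimes $A^\pm_\infty$ as $T\to\infty$, so that Theorem~\ref{thm:representation} yields the weak limit. Your write-up makes this rigorous and even notes the stronger almost-sure Skorokhod convergence, which the paper itself alludes to in the remark immediately following the corollary.
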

It is noted that the above weak convergence statement can be strengthened, see~\cite[Cor.\ 3.2]{bertoin_splitting}, but we prefer using Theorem~\ref{thm:representation} directly when needed.
The pair $(X^\downarrow,X^\uparrow)$ is our main object of interest. 
According to Corollary~\ref{cor:object}, the process $X^\uparrow:\p$ can be called a limiting post-infimum process.
In analogy to the univariate case we instead call it \emph{$X$ conditioned to stay in the half-plane $S$}, and provide a justification below.

Observe that $-X_t^\downarrow,X_t^\uparrow\in S\cup\Set{\dagger}$ for $t>0$ a.s., whereas the initial values are classified according to the trichotomy in Lemma~\ref{lem:trichotomy}.
In particular, $X_0^\uparrow=0$ in cases $(\updownarrow),(\uparrow)$, and $X_0^\downarrow=0$ in cases $(\updownarrow),(\downarrow)$.
Importantly, the projected conditioned processes $\inner{X^\uparrow}{\eta}$ and $\inner{X^\downarrow}{\eta}$ coincide with the univariate L\'evy process $Z$ conditioned to stay positive and negative, respectively.
In particular, the lifetimes $\zeta^\uparrow$ and $\zeta^\downarrow$ can be studied using the univariate theory, and so
\[
\zeta^\uparrow=\infty\quad\text{ iff }\quad\limsup_{t\to\infty} Z_t=\infty,\qquad \zeta^\downarrow=\infty\quad\text{ iff }\quad\liminf_{t\to\infty} Z_t=-\infty
\]with probability~$1$. 
Furthermore, $\zeta^\uparrow>0$ unless $Z$ is a non-increasing process and then $\zeta^\uparrow=0$ a.s.
Yet another useful observation is given by the following result.
\begin{lemma}\label{lem:continuity}
The processes $X^\uparrow$ and $X^\downarrow$ do not jump at a fixed $t>0$ a.s.
\end{lemma}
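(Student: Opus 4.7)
The plan is to combine the representation of Theorem~\ref{thm:representation} with a Palm-calculus argument for the jump measure of $X$. By the symmetry exchanging $X^\uparrow$ and $X^\downarrow$ (replace $\eta$ with $-\eta$), it suffices to treat $X^\uparrow$; fix $t>0$.

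First I transfer the question to the unconditioned process. For every $T>0$, since $\underrightarrow{X}_s = X_{\tau+s} - \underline X$ on $[0, T-\tau)$, Theorem~\ref{thm:representation} yields
\[
\p(\Delta X^\uparrow_t \neq 0,\, t < A^+_T) = \p^T(\Delta X_{\tau+t} \neq 0,\, \tau+t < T),
\]
so it suffices to show that the right-hand side vanishes.

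The jumps of $X$ under $\p^T$ form a Poisson random measure on $(0,T]\times(\R^d\setminus\{0\})$ with intensity $ds\otimes\Pi(dx)$. Applying Mecke's formula to the (non-adapted) functional $\ind{\{s=\tau(X)+t\}}$ gives
\[
\p^T(\Delta X_{\tau+t}\neq 0) = \int_0^T\!\!\int_{\R^d\setminus\{0\}} \p^T\bigl(\tau(X^{s,x}) = s-t\bigr)\,\Pi(dx)\,ds,
\]
where $X^{s,x}_u := X_u + x\ind{\{u\ge s\}}$ is the path with an added jump of size $x$ at time $s$, and $\tau(X^{s,x})$ is its directional infimum time on $[0,T]$. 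For fixed $(s,x)$, this infimum is attained either at the argmin of $Z$ on $[0,s)$ or at some time in $[s,T]$; the latter is $\ge s>s-t$, while the former equals $s-t$ only if $s>t$. By Assumption~\ref{as:notCPP}, the time of infimum of a non-compound-Poisson L\'evy process on a compact interval has no atoms in the open interior, so this probability is zero; the residual set $\{s=t\}$ is Lebesgue-null in $s$, hence the integral vanishes.

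Letting $T\nearrow\infty$ and using $A^+_T\nearrow\zeta^\uparrow$ finally gives $\p(\Delta X^\uparrow_t\neq 0,\,t<\zeta^\uparrow)=0$. The main obstacle is the atomless-interior property of the time of infimum, which is a classical consequence of Assumption~\ref{as:notCPP} (see, e.g., Bertoin's L\'evy-process book, Chapter~VI); once this is invoked, everything else is a matter of bookkeeping through the representation and the Poisson structure of the jumps.
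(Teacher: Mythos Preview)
Your Mecke/Palm approach is genuinely different from the paper's. The paper works with the reversed pre-infimum process $\underleftarrow X$ and argues by cases via the trichotomy of Lemma~\ref{lem:trichotomy}: in the regular cases $(\updownarrow),(\downarrow)$ a strong-Markov argument at jump times (as in that lemma's proof) rules out a jump of $X$ at $\tau-t$, while in case $(\uparrow)$ the process already jumps into its infimum at~$\tau$, so a jump of $\underleftarrow X$ at $t$ would force two jumps of $X$ exactly time $t$ apart, which a L\'evy process cannot have. Your route avoids the case split and is more systematic, at the price of invoking Mecke's formula; the ``atomless interior'' property of the argmin that you use is essentially the same regularity input the paper exploits via the strong Markov property. (A minor slip: ``only if $s>t$'' should read $s\ge t$, but your Lebesgue-null remark covers $s=t$, so the conclusion survives.)

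There is, however, a real gap. Your final line only yields $\p(\Delta X^\uparrow_t\neq 0,\,t<\zeta^\uparrow)=0$, i.e.\ no jump \emph{within} $S$ at time~$t$; you never address the jump to~$\dagger$, namely $\p(\zeta^\uparrow=t)=0$. The paper treats this explicitly, and the lemma is later invoked precisely for the statement that $\zeta^\uparrow$ is atomless (see the proof of Theorem~\ref{thm:chararcterization}). The fix is easy within your framework: by Theorem~\ref{thm:representation} the lifetime $A^+_T$ of $X^\uparrow:\p^T$ has the law of $T-\tau$ under~$\p^T$, hence no atom in $(0,T)$ by the very atomless-interior fact you already used; since $A^+_T\nearrow A^+_\infty=\zeta^\uparrow$ and, on $\{\zeta^\uparrow<\infty\}$, the limit is attained at some finite~$T$ (the last exit of $Z$ from $(0,\infty)$ is finite when $Z\to-\infty$), one deduces $\p(\zeta^\uparrow=t)=0$ for every $t>0$.
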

\begin{proof}
Assume that $\underleftarrow X:\p^T$ jumps at $t>0$ with positive probability. Then by an argument as in the proof of Lemma~\ref{lem:trichotomy} we find that we must be in the case~$(\uparrow)$.
Hence $X$ has two jumps separated by time $t$ with positive probability, which is impossible.
According to Theorem~\ref{thm:representation} we find that $X^\downarrow$ has no jump at $t$ a.s.\ when excluding the jump into $\dagger$.
The latter would imply $\p(\zeta^\downarrow=t)>0$, which is again impossible by a similar argument.
By time-reversal the same property is true with respect to $X^\uparrow$.
\end{proof}

Importantly, (under Assumption~\eqref{as:notCPP}) the process $X^\downarrow$ is a.s.\ the same if the non-strict inequalities in its definition are replaced by strict inequalities, which follows from  basic properties of L\'evy processes.
In particular, we find that $X^\downarrow=-(-X)^\uparrow$ a.s. The respective equality in distribution can also be seen using the representation in Theorem~\ref{thm:representation} and the standard time-reversal argument.
Finally, observe a close link to the classical Sparre-Andersen identity~\cite[Lem.\ VI.15]{bertoin}: 
$A^+_T$ has the same law as the time of the supremum of $Z$ on $[0,T]$, which by time-reversal coincides with the law of the lifetime of the respective post-infimum process.

\section{Motivating limit theorem}\label{sec:application}
Bertoin's representation and its above stated generalization are indispensable in the study of L\'evy processes around their extremes.
In the one-dimensional setting it has been fundamental for the results in~\cite{bis_iva,iva_pod}.
We further demonstrate its usefulness by establishing an invariance principle, see~\cite{chaumont_doney_inv} and~\cite{iva_zooming} for alternative approaches in the univariate case (the latter needs a better justification of convergence of Markov processes).
The following short proof requires certain assumptions, and for simplicity we consider only the case of an oscillating $Z_t=\inner{X_t}{\eta}$:
\begin{equation}\label{eq:oscillating}
\limsup_{t\to\infty}Z_t=\infty\qquad \text{and}\qquad \liminf_{t\to\infty}Z_t=-\infty \qquad \text{a.s.}
\end{equation}
Recall that this assumption implies that both $X^\uparrow$ and $X^\downarrow$ have infinite lifetimes.

\begin{theorem}\label{thm:invariance}
Let $X^\n$ be a sequence of L\'evy processes weakly convergent to a L\'evy process $X$ satisfying~\eqref{eq:oscillating} and Assumption~\eqref{as:notCPP}.
Then for any sequence of finite deterministic times $T_n\to \infty$ there is the weak convergence
\[(\underleftarrow X^\n,\underrightarrow X^\n): \p^{T_n}\convd(-X^\downarrow,X^\uparrow).\]
\end{theorem}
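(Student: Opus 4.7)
The plan is to reduce the invariance principle to a pathwise continuity statement via Theorem~\ref{thm:representation}. For each $n$, that theorem gives
\[(\underleftarrow X^\n, \underrightarrow X^\n):\p^{T_n} \stackrel{d}{=} (-X^{\n,\downarrow}, X^{\n,\uparrow}):\p^{T_n},\]
where the right-hand side is built from $X^\n$ by the same integral--time-change recipe, killed at the times $A^{\pm,\n}_{T_n}$. Since $(-X^\downarrow, X^\uparrow):\p$ has infinite lifetime by~\eqref{eq:oscillating}, the task reduces to showing $(-X^{\n,\downarrow}, X^{\n,\uparrow}):\p^{T_n} \convd (-X^\downarrow, X^\uparrow):\p$.

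Next, I would invoke Skorokhod's representation theorem to realize $X^\n\to X$ almost surely in the $J_1$-topology on $\D([0,\infty),\R^d)$ and argue pathwise. Under Assumption~\eqref{as:notCPP} and~\eqref{eq:oscillating}, the limit path $X$ a.s.\ satisfies: (i)~$\operatorname{Leb}\Set{s\geq 0\given \inner{X_s}{\eta}=0}=0$; (ii)~$A^\pm$ are continuous and strictly increasing with $A^\pm_t\to\infty$; and (iii)~$\inner{X_{s-}}{\eta}\neq 0$ at every jump time $s$, since jump times of a L\'evy process are totally inaccessible and the pre-jump law has no atom on $\partial S$. On the full-measure set where $X^\n\to X$ in $J_1$ and (i)--(iii) hold, I would verify: (a)~$A^{\pm,\n}\to A^\pm$ locally uniformly, by continuous mapping using~(i); (b)~the inverses $\alpha^{\pm,\n}\to\alpha^\pm$ locally uniformly with continuous limit, by~(ii); (c)~$Y^{\pm,\n}\to Y^\pm$ in $J_1$; and (d)~$A^{\pm,\n}_{T_n}\to\infty$, from (a), (ii) and $T_n\to\infty$. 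Steps (b) and (c), combined with the composition rule for $J_1$-convergence along a uniformly convergent continuous time change, yield $X^{\n,\uparrow}\to X^\uparrow$ and $X^{\n,\downarrow}\to X^\downarrow$ in $J_1$; invoking (d) together with the infinite lifetime of the limit shows that the killing at $A^{\pm,\n}_{T_n}$ is asymptotically invisible on any compact window.

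I expect step~(c), the $J_1$-convergence of $Y^{\pm,\n}$, to be the main obstacle. Writing informally $Y^+_t=\int_0^t\ind{\Set{\inner{X_{s-}}{\eta}>0}}\idd X_s$, the jump part $\sum_{0<s\leq t}\ind{\Set{\inner{X_{s-}}{\eta}>0}}\Delta X_s$ is handled using~(iii): at each jump time of the limit the sign of $\inner{X_{s-}}{\eta}$ is strict, so the sign-check is stable under $J_1$-perturbations and jumps of $Y^{\n,+}$ align with those of $Y^+$ in both location and size. The continuous part requires an approximation of stochastic integrals against the Brownian-plus-drift component of $X$ with bounded non-predictable indicator integrands converging Lebesgue-a.e., where~(i) is what rules out pathological accumulation on $\partial S$. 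Once (c) is established, the remaining steps are bookkeeping about time changes and killing.
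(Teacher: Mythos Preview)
Your route is genuinely different from the paper's, and the obstacle you flag at~(c) is real enough that the proposal as written has a gap. Under a Skorokhod coupling you obtain $X^\n\to X$ a.s.\ in $J_1$, but $Y^{+}_t$ is a stochastic integral against the Brownian part of $X$ and is \emph{not} a pathwise functional of the trajectory; the coupling gives no control over the relation between the Brownian components of $X^\n$ and $X$, so ``approximation of stochastic integrals with indicator integrands converging Lebesgue-a.e.'' does not follow from almost-sure $J_1$-convergence alone. To make~(c) rigorous you would need a stability theorem for stochastic integrals under convergence of semimartingale integrators (e.g.\ Kurtz--Protter under a UT condition, which L\'evy processes do satisfy) together with convergence of the predictable integrands $\ind{\{\langle X^{\n}_{s-},\eta\rangle>0\}}$ in an appropriate sense; none of this is supplied, and it is substantially heavier than the rest of your outline.

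The paper sidesteps this entirely by applying Theorem~\ref{thm:representation} twice rather than once. First it fixes a finite $T$ and proves $(\underleftarrow X^\n,\underrightarrow X^\n)\convd(\underleftarrow X,\underrightarrow X)$ under $\p^T$ via the continuous mapping theorem, since splitting at the directional infimum \emph{is} a pathwise functional (using Assumption~\eqref{as:notCPP} and Lemma~\ref{lem:trichotomy}). Theorem~\ref{thm:representation} then transfers this to $(-{X^\n}^\downarrow,{X^\n}^\uparrow):\p^T\convd(-X^\downarrow,X^\uparrow):\p^T$ without ever needing $Y^{\pm,\n}\to Y^\pm$. Passing from the fixed $T$ to $T_n\to\infty$ is handled by a standard triangular approximation (Billingsley~Thm.~3.2), for which it suffices that $\lim_{T\to\infty}\limsup_n\p({A^\n}^\pm_T>t)=1$; this follows because ${A^\n}^\pm_T\convd A^\pm_T$ and $A^\pm_T\uparrow\infty$ by~\eqref{eq:oscillating}. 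The payoff of the paper's detour is precisely that the hard stochastic-integral continuity question in your step~(c) never arises.
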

\begin{proof}
Fix an arbitrary finite $T>0$. By the continuous mapping theorem we have under $\p^T$:
\[(\underleftarrow X^\n,\underrightarrow X^\n)\convd (\underleftarrow X,\underrightarrow X).\]
Indeed, for converging paths the directional infima and their (right) times must converge assuming the limiting path has no jump at $T$ and it achieves the directional infimum only once (this is a.s.\ true).
Furthermore, $X$ has no jump perpendicular to $\eta$ at $\tau$, see Lemma~\ref{lem:trichotomy} and Figure~\ref{fig:illustration} (right).
Note that making all processes stay at 0 for negative times is essential in the case when the limit process jumps at~$\tau$.

According to Theorem~\ref{thm:representation} we have
\[(-{X^\n}^\downarrow,{X^\n}^\uparrow):\p^T\convd (-X^\downarrow,X^\uparrow):\p^T\]
for every $T>0$, and the latter weakly converges to $(-X^\downarrow,X^\uparrow)$ as $T\to\infty$.
Thus it is left to apply a standard approximation result~\cite[Thm.\ 3.2]{billingsley} or~\cite[Thm.\ 4.28]{kallenberg} to obtain
\begin{equation}\label{eq:convd2}(-{X^\n}^\downarrow,{X^\n}^\uparrow):\p^{T_n}\convd (-X^\downarrow,X^\uparrow),\end{equation}
and hence also the stated result (apply Theorem~\ref{thm:representation} to the left hand side).
The crux of the approximation result consists in showing that the Skorokhod distance (on each compact time interval $[0,t]$) between the left hand side in~\eqref{eq:convd2} and the same object for the time horizon~$T$ converges to 0 in probability  as $T\to\infty$ uniformly for large~$n$.
In our case it is sufficient to check that
\[\lim_{T\to\infty}\limsup_n\p({A^\n}^\pm_{T_n}\wedge {A^\n}^\pm_T>t)= 1,\qquad t>0,\]
where the event corresponds to two identical paths on the time interval~$[0,t]$. 
We may assume that $T_n\geq T$ implying ${A^\n}^\pm_{T_n}\geq {A^\n}^\pm_T$, but the latter weakly converges to $A^\pm_T$. 
Finally, note that~\eqref{eq:oscillating} implies $A^\pm_\infty=\infty$ a.s.
\end{proof}

The above argument can be adapted to include the case where $\lim_{t\to\infty}Z_t=\infty$ and $\lim_{t\to\infty}Z^\n_t=\infty$ for all large enough~$n$, as well as the case with $-\infty$ limits.
 That is, the infinite-time behavior of $Z$ and the approximating sequence $Z^\n$ is the same. 
Otherwise, the proof becomes substantially more difficult and it is then required to work with a compactified space where $\dagger$ is a point at infinity.

Finally, we show that zooming in on $X$ at the time-space location of the directional infimum results in the pair of conditioned processes corresponding to the underlying Brownian part.
This limit law is studied in Proposition~\ref{prop:BM} below.
\begin{corollary}\label{cor:zooming_at_inf}
Let $B$ be the Brownian part of the $d$-dimensional $X$, and assume that $\inner{B_1}{\eta}$ is not a.s.\ zero. Then
\[\sqrt n(\underleftarrow X_{\cdot/n},\underrightarrow X_{\cdot/n}):\p^1  \convd (-B^\downarrow,B^\uparrow).\]
\end{corollary}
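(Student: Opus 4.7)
The plan is to reduce Corollary~\ref{cor:zooming_at_inf} to a direct application of Theorem~\ref{thm:invariance} via a space-time rescaling. Let $\p_n$ denote the law of the Lévy process $X^{\n}_s := \sqrt{n}\, X_{s/n}$, $s\ge 0$. The first step is the scaling identity
\[\sqrt{n}\bigl(\underleftarrow{X}_{\cdot/n},\underrightarrow{X}_{\cdot/n}\bigr):\p^1 \;\stackrel{d}{=}\; \bigl(\underleftarrow{X^{\n}},\underrightarrow{X^{\n}}\bigr):\p_n^{n},\]
which is immediate path-by-path from how the directional infimum transforms: if $Z=\inner{X}{\eta}$ attains its infimum on $[0,1]$ at $(\tau,\underline X)$, then $Z^{\n}_\cdot=\sqrt n\,Z_{\cdot/n}$ attains its infimum on $[0,n]$ at $(n\tau,\sqrt n\,\underline X)$, and consequently $\underrightarrow{X^{\n}}_s=\sqrt n\,\underrightarrow{X}_{s/n}$ and $\underleftarrow{X^{\n}}_s=\sqrt n\,\underleftarrow{X}_{s/n}$.

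Second, I would verify the classical zooming-in fact that $X^{\n}\convd B$ as Lévy processes, where $B$ is the Brownian part of $X$. This amounts to the pointwise convergence $\tfrac{1}{n}\Psi_X(\sqrt n\,\theta)\to -\tfrac12\theta^{\top}\Sigma\theta$ of characteristic exponents, which is straightforward Lévy–Khintchine bookkeeping: the drift contribution is $O(1/\sqrt n)$, the Brownian contribution equals $-\tfrac12\theta^{\top}\Sigma\theta$ for every $n$ by Brownian scaling, and the Lévy-measure contribution vanishes by dominated convergence upon splitting into small and large jumps.

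Third, I would verify that the limit $B$ satisfies the hypotheses of Theorem~\ref{thm:invariance}. Since $\inner{B_1}{\eta}$ is not a.s.\ zero by assumption, the projection $\inner{B}{\eta}$ is a non-degenerate one-dimensional Brownian motion; in particular it oscillates, so \eqref{eq:oscillating} holds (and $B^\uparrow,B^\downarrow$ have infinite lifetimes), and it is not a compound Poisson process, so Assumption~\eqref{as:notCPP} holds. Theorem~\ref{thm:invariance} applied to the sequence $X^{\n}$ with horizons $T_n:=n\to\infty$ then yields
\[\bigl(\underleftarrow{X^{\n}},\underrightarrow{X^{\n}}\bigr):\p_n^{n} \;\convd\; (-B^\downarrow,B^\uparrow),\]
which combined with the scaling identity proves the claim.

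The main obstacle is really the bookkeeping in the first step: one has to check that the rescaling commutes with the construction of $\underrightarrow X$ and $\underleftarrow X$, including the handling of the cemetery state $\dagger$ at the killing times $n(1-\tau)$ and $n\tau$, and that the choice between $X_\tau$ and $X_{\tau-}$ in the definition of $\underline X$ is preserved under scaling (which it is, since the sign of $Z_\tau-Z_{\tau-}$ is scale invariant, cf.\ Lemma~\ref{lem:trichotomy}). Once this equivariance is in place, the rest is a clean combination of the standard Lévy-process zooming-in computation with the invariance principle of Theorem~\ref{thm:invariance}.
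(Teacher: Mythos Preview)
Your proposal is correct and follows essentially the same route as the paper: define $X^{\n}_t=\sqrt n\,X_{t/n}$, note that $X^{\n}\convd B$, and apply Theorem~\ref{thm:invariance} with $T_n=n$. The paper's proof is simply terser, citing references for the zooming-in convergence and leaving the scaling identity and the verification of the hypotheses on $B$ implicit, whereas you spell these steps out.
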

\begin{proof}
Define a scaled time-changed process $X^\n_t=\sqrt n X_{t/n}$ and note that $X^\n\convd B$, see~\cite[Prop.\ 2]{bertoin} and~\cite[Thm.\ 15.17]{kallenberg}. 
It is left to apply Theorem~\ref{thm:invariance} with $T_n=n$.
\end{proof}

\section{Linear transformations and the Brownian example}\label{sec:linear}

Linear transformations play an important role in the multivariate theory as demonstrated by the following result.
\begin{lemma}\label{lem:linear_transformation}
	Consider a $d'\times d$ matrix $M$ and $d'$-dimensional vector~$\eta'\neq 0$ such that $M^\top\eta'\neq 0$. Then $(MX)^\uparrow$ defined using $\eta=\eta'$ coincides with $M(X^\uparrow)$ defined using $\eta=M^\top \eta'$.
\end{lemma}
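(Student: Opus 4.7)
The plan is to prove the identity pathwise (or rather, construction-by-construction) by tracing how the ingredients of the definition of $X^\uparrow$ in \S\ref{sec:representation} transform under the linear map $M$. The key algebraic observation is that
\[
\inner{MX_s}{\eta'}=\inner{X_s}{M^\top\eta'},
\]
so the projected process appearing in the construction of $(MX)^\uparrow$ with direction $\eta'$ is \emph{literally the same stochastic process} as the projected process $Z$ appearing in the construction of $X^\uparrow$ with direction $M^\top\eta'$. In particular, the assumption $M^\top\eta'\neq 0$ is precisely what guarantees that the right-hand construction is nondegenerate.

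Given this, I would verify the lemma in three short steps. First, because the indicator $\ind{\Set{\inner{X_s}{M^\top\eta'}>0}}=\ind{\Set{\inner{MX_s}{\eta'}>0}}$, the occupation times coincide:
\[
A^+_t(MX,\eta')=\int_0^t\ind{\Set{\inner{MX_s}{\eta'}>0}}\idd s=\int_0^t\ind{\Set{\inner{X_s}{M^\top\eta'}>0}}\idd s=A^+_t(X,M^\top\eta'),
\]
and consequently their right-continuous inverses $\alpha^+$ agree. Second, the time-reversal commutes with $M$, i.e.\ $\widetilde{MX}_t=M\tilde X_t$, and by linearity of the stochastic integral (applied coordinate-wise, since $MX$ has components that are linear combinations of the components of $X$),
\[
Y^+_t(MX,\eta')=-\int_{[-t,0]}\ind{\Set{\inner{\tilde X_{s-}}{M^\top\eta'}>0}}\idd(M\tilde X_s)=M\Bigl(-\int_{[-t,0]}\ind{\Set{\inner{\tilde X_{s-}}{M^\top\eta'}>0}}\idd\tilde X_s\Bigr)=MY^+_t(X,M^\top\eta').
\]
Third, composing with the common time change yields
\[
(MX)^\uparrow_t=Y^+_{\alpha^+_t}(MX,\eta')=M\,Y^+_{\alpha^+_t}(X,M^\top\eta')=M(X^\uparrow_t),
\]
with the convention that $M\dagger=\dagger$ handling the killing consistently.

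There is no real obstacle here; the only point warranting care is justifying the second step, namely pulling the constant matrix $M$ out of the vector-valued stochastic integral against the semimartingale $\tilde X$. This is routine once one recalls that each coordinate $(MX)^{(i)}$ is a fixed linear combination of the coordinates of $X$, so the identity reduces to the elementary linearity of scalar stochastic integrals applied row by row of $M$. Everything else is purely a matter of matching definitions, and the underlying reason the statement holds is the single identity $\inner{MX}{\eta'}=\inner{X}{M^\top\eta'}$.
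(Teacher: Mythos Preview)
Your proposal is correct and follows exactly the approach of the paper's proof, which simply notes the identity $\inner{MX}{\eta'}=\inner{X}{M^\top\eta'}$ and invokes linearity of the stochastic integral in the definition of~$Y^\pm$. You have spelled out in detail the three steps (matching occupation times and their inverses, pulling $M$ through the integral defining $Y^+$, and composing with the common time change) that the paper leaves implicit in its two-line sketch.
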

\begin{proof}
Note that $\inner{MX}{\eta'}=\inner{X}{M^\top\eta'}$ and use linearity of the stochastic integral in the definition of~$Y^\pm$.
\end{proof}
Consequently, it suffices to study conditioning for just one direction, say
\[\eta_1=(1,0,\dotsc,0)^\top\in\R^d.\] 
For any unit vector $\eta\in\R^d$  we may choose an orthogonal matrix $R$ ($RR^\top=I$) such that $R\eta=\eta_1$. 
Then $X^\uparrow$ coincides with $R^\top(RX)^\uparrow$ where the latter is defined for the direction $\eta_1$.
Our next result allows us reduce certain multivariate cases to the univariate theory.
\begin{lemma}\label{lem:specialcase}
Consider $X=X'v+X''$, where $X'$ and $X''$ are independent L\'evy processes with dimensions $1$ and $d$ respectively, and additionally $\inner{v}{\eta}>0$,  $\inner{X''_t}{\eta}=0,t\geq 0$.
Then $X^\uparrow\stackrel{d}{=}{X'}^\uparrow v+X''$, where ${X'}^\uparrow$ is the univariate $X'$ conditioned to stay positive and by convention $\dagger\cdot v+x''=\dagger$. 
\end{lemma}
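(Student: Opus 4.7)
The plan is to apply the pathwise construction from \S\ref{sec:representation} directly, in the spirit of the proof of Lemma~\ref{lem:linear_transformation}: the hypothesis forces all indicators in the construction to depend only on $X'$, which reduces the multivariate conditioning to a univariate one plus an independent residual coming from $X''$.

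First I observe that the projected process is just a positive scaling of $X'$: since $\inner{X''_t}{\eta}=0$, we have $Z_t=\inner{X_t}{\eta}=\inner{v}{\eta}\,X'_t$, and $\inner{v}{\eta}>0$ gives $\ind{\Set{Z_s>0}}=\ind{\Set{X'_s>0}}$. Writing $(A')^+,(\alpha')^+,(Y')^+$ for the univariate analogues associated with $X'$ (with the half-line $(0,\infty)$), one immediately has $A^+=(A')^+$ and $\alpha^+=(\alpha')^+$.

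Next, by linearity of the stochastic integral and the decomposition $\tilde X_s=v\,\tilde X'_s+\tilde X''_s$,
\[
Y^+_t=v\cdot (Y')^+_t+W^+_t,\qquad W^+_t:=-\int_{[-t,0]}\ind{\Set{\tilde X'_{s-}>0}}\idd\tilde X''_s,
\]
and time-changing by $\alpha^+_t=(\alpha')^+_t$ yields the pathwise identity $X^\uparrow_t=v\cdot (X')^\uparrow_t+W^+_{\alpha^+_t}$.

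The main step is to identify the joint law of $(X')^\uparrow$ and the residual $R_t:=W^+_{\alpha^+_t}$. Conditioning on $X'$, both the indicator $\ind{\Set{\tilde X'_{s-}>0}}$ and the time change $\alpha^+$ become deterministic, and since $\tilde X''$ is independent of $X'$ with stationary independent increments, $R$ is obtained by integrating $\tilde X''$ against a deterministic indicator and then reparametrizing by cumulative Lebesgue measure. Via the Poisson-random-measure (plus Gaussian) representation of $\tilde X''$, such a restriction-and-reparametrization preserves the law, so conditional on $X'$ the process $R$ has the law of $X''$. Since this conditional law does not depend on $X'$, $R$ is independent of $(X')^\uparrow$ with the law of $X''$, giving the claim.

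The main obstacle is this last time-change step: justifying rigorously that for a deterministic $E\subset(-\infty,0)$ with $|E|=\infty$, the process formed by integrating $\tilde X''$ over the portion of $E$ of cumulative measure $t$ is a L\'evy process with the law of $X''$. This is conceptually clean via the Poisson-point-process description of $\tilde X''$ but needs a short verification for the Brownian part. A conceptual alternative avoids the construction entirely: by Corollary~\ref{cor:object} it suffices to establish the identity for $\underrightarrow X:\p^T$ at finite $T$, where one has the pathwise decomposition $\underrightarrow X_t=\underrightarrow{X'}_t v+(X''_{\tau+t}-X''_\tau)$ (using $\inner{X''_t}{\eta}=0$ and the fact that $X''$ a.s.\ does not jump at the $X'$-measurable time $\tau$); independence of $X''$ from $X'$ then makes $X''_{\tau+\cdot}-X''_\tau$ distributed as $X''$ and independent of $\underrightarrow{X'}$, and letting $T\to\infty$ gives the result.
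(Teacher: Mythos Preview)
Your proposal is correct, and the ``conceptual alternative'' you sketch at the end is exactly the paper's own proof: one checks that under $\p^T$ the post-infimum process satisfies $\underrightarrow X\stackrel{d}{=}\underrightarrow{X'}v+X''$ (since $\tau$ is $X'$-measurable and $X''$ is independent of $X'$ with no fixed-time jumps), and then applies Corollary~\ref{cor:object} to pass to $T\to\infty$. Your primary route via the pathwise construction is also valid but more laborious: the decomposition $X^\uparrow=v\,(X')^\uparrow+R$ is immediate, and the remaining lemma---that integrating a L\'evy process against a conditionally deterministic indicator and reparametrizing by cumulative Lebesgue measure yields a process with the original law, independently of the conditioning---is standard via the Poisson--Gaussian representation, though you should note it must also handle the case $A^+_\infty<\infty$ (your condition $|E|=\infty$ excludes this) by matching lifetimes via the convention $\dagger\cdot v+x''=\dagger$. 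The paper's route is shorter because it sidesteps the stochastic-integral machinery entirely; your direct route has the mild advantage of yielding the pathwise identity $X^\uparrow=v\,(X')^\uparrow+R$ (with $(X')^\uparrow$ the very process built from the same $X'$), not just equality in law.
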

\begin{proof}
Note that the process $\underrightarrow X:\p^T$ has the same law as $\underrightarrow{X'}v+X'':\p^T$, where $\underrightarrow{X'}$ is the post-infimum process of univariate~$X'$.
This is so, because $Z_t=\inner{v}{\eta}X'_t$ and the process $X''$ is independent of~$\tau$, whereas $\underrightarrow{X'v}=\underrightarrow{X'}v$ under $\p^T$. 
It is left to apply Corollary~\ref{cor:object} and the continuous mapping theorem.
\end{proof}

We are now ready to treat the basic example of a conditioned Brownian motion.
In this regard note that a univariate standard Brownian $B^{(1)}$ conditioned to stay positive is a Bessel-3 process which we denote by ${B^{(1)}}^\uparrow$.
\begin{proposition}\label{prop:BM}
Let $X$ be a (driftless) Brownian motion with covariance matrix~$\Sigma$ such that $\Sigma\eta\neq 0$. Then 
\[X^\uparrow\stackrel{d}{=}-X^\downarrow\stackrel{d}{=}MR ({B^{(1)}}^\uparrow,B^{(2)},\ldots,B^{(d)})^\top,\]
where $B=(B^{(1)},\ldots,B^{(d)})^\top$ is a standard Brownian motion in $\R^d$, and the square matrices $M$ and $R$ satisfy 
\[MM^\top=\Sigma,\qquad RR^\top=I,\qquad R^\top M^\top\eta=\sqrt{\eta^\top\Sigma\eta}\eta_1.\]
\end{proposition}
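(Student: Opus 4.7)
The plan is to chain the two linear-transformation lemmas (Lemmas~\ref{lem:linear_transformation} and~\ref{lem:specialcase}) with the classical fact that a standard univariate Brownian motion conditioned to stay positive is a Bessel-3 process. Throughout I will write $Y^\uparrow_{\eta'}$ to make the conditioning direction explicit, and record the tautological scale invariance $Y^\uparrow_{c\eta'}\stackrel{d}{=}Y^\uparrow_{\eta'}$ for $c>0$, which holds because only the sign of $\inner{\cdot}{\eta'}$ enters the definitions of $Y^\pm$ and $A^\pm$ in~\S\ref{sec:representation}.

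First I would realize $X\stackrel{d}{=}MB$ for $B$ a standard Brownian motion in $\R^d$, which is possible by the choice of $M$ with $MM^\top=\Sigma$. The assumption $\Sigma\eta\neq 0$ combined with positive semi-definiteness gives $\|M^\top\eta\|^2=\eta^\top\Sigma\eta>0$, hence $M^\top\eta\neq 0$; Lemma~\ref{lem:linear_transformation} applied to $M$ in direction $\eta$ then yields $X^\uparrow\stackrel{d}{=}M\cdot B^\uparrow_{M^\top\eta}$. Next, setting $\tilde\eta:=M^\top\eta$, I would write $B=R(R^\top B)$ and apply Lemma~\ref{lem:linear_transformation} again, this time to the orthogonal matrix $R$ and direction $\tilde\eta$, using $R^\top\tilde\eta=\sqrt{\eta^\top\Sigma\eta}\eta_1\neq 0$. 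Together with scale invariance this collapses to $B^\uparrow_{\tilde\eta}\stackrel{d}{=}R\cdot(R^\top B)^\uparrow_{\eta_1}$, and orthogonal invariance of standard Brownian motion lets me replace $R^\top B$ by $B$ in law. Finally, I would decompose $B=B^{(1)}\eta_1+(0,B^{(2)},\dots,B^{(d)})^\top$ and invoke Lemma~\ref{lem:specialcase} with $X'=B^{(1)}$, $v=\eta_1$ (so $\inner{v}{\eta_1}=1>0$) and $X''=(0,B^{(2)},\dots,B^{(d)})^\top$ (independent of $B^{(1)}$ with $\inner{X''_t}{\eta_1}=0$) to conclude $B^\uparrow_{\eta_1}\stackrel{d}{=}({B^{(1)}}^\uparrow,B^{(2)},\dots,B^{(d)})^\top$, with ${B^{(1)}}^\uparrow$ a Bessel-3 process. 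Chaining the three identities produces the displayed formula.

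For $X^\uparrow\stackrel{d}{=}-X^\downarrow$, I would combine two observations already in~\S\ref{sec:representation}: a driftless Brownian motion is symmetric, so $-X\stackrel{d}{=}X$ as processes, and the a.s.\ identity $X^\downarrow=-(-X)^\uparrow$ recorded after Lemma~\ref{lem:continuity} yields $-X^\downarrow=(-X)^\uparrow\stackrel{d}{=}X^\uparrow$. The only genuinely non-formal point is the scale-invariance step, which is immediate from the definitions; the rest is direct bookkeeping with Lemmas~\ref{lem:linear_transformation} and~\ref{lem:specialcase}, so I do not anticipate a substantive obstacle.
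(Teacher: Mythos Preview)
Your proof is correct and follows the same route as the paper: use Lemma~\ref{lem:linear_transformation} to reduce to a standard Brownian motion conditioned in the direction~$\eta_1$, then invoke Lemma~\ref{lem:specialcase}, and deduce $X^\uparrow\stackrel{d}{=}-X^\downarrow$ from the symmetry $-X\stackrel{d}{=}X$. The only cosmetic difference is that the paper writes $X\stackrel{d}{=}MRB$ and applies Lemma~\ref{lem:linear_transformation} once with the matrix $MR$, whereas you factor this into two applications (first $M$, then $R$) and make the scale invariance and the orthogonal invariance of $B$ explicit; the content is identical.
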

\begin{proof}
The first distributional equality is a consequence of~$-X\stackrel{d}{=}X$. 
Next, using $X\stackrel{d}{=}MRB$ and Lemma~\ref{lem:linear_transformation} we find that $X^\uparrow$ has the law of $MR(B^\uparrow)$ for the direction~$R^\top M^\top\eta$,
where the latter is proportional to $\eta_1$.
It is left to apply Lemma~\ref{lem:specialcase} to find that $B^\uparrow$ for the direction $\eta_1$ has the law of $({B^{(1)}}^\uparrow,B^{(2)},\ldots,B^{(d)})^\top$.
\end{proof}

\begin{example}\label{ex:BM}
Take $d=2, \eta=(a,b)^\top$ and a Brownian motion $X$ with standard deviations $\sigma_1,\sigma_2>0$ and correlation~$\rho\in (-1,1)$. 
Then Proposition~\ref{prop:BM} yields
\[X^\uparrow\stackrel{d}{=}\frac{1}{\sqrt{a^2\sigma_1^2+2ab\sigma_1\sigma_2\rho+b^2\sigma_2^2}}\begin{pmatrix}a\sigma^2_1+b\sigma_1\sigma_2\rho & -b\sigma_1\sigma_2\sqrt{1-\rho^2}\\
a\sigma_1\sigma_2\rho+b\sigma_2^2 & a\sigma_1\sigma_2\sqrt{1-\rho^2}
  \end{pmatrix}\begin{pmatrix}{B^{(1)}}^\uparrow\\ B^{(2)}\end{pmatrix},\]
  where we used a Cholesky square-root~$M$.
\end{example}

In particular, simulation of $X^\uparrow$ over a grid is a trivial task when $X$ is a driftless Brownian motion.
We depict two independent sample paths in Figure \ref{fig:pathsimulation}.
\begin{figure}[H]
	\center
	\includegraphics{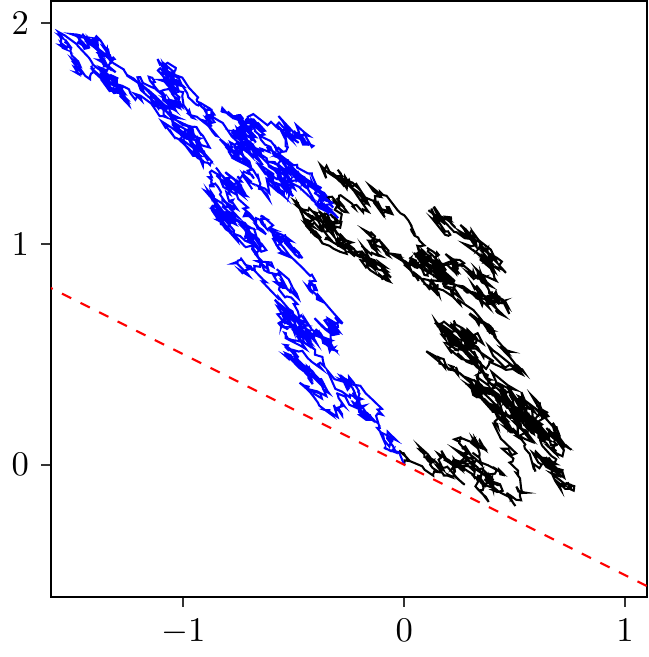}
	\caption{Two independent paths simulated from the common law of $-X^\downarrow$ and $X^\uparrow$ for the direction $\eta=(1,2)^\top$, where $\sigma_1=\sigma_2=1$ and $\rho=-0.8$. The red line is the boundary of the corresponding half-space.}
	\label{fig:pathsimulation}
\end{figure}
Further insight can be obtained from Figure \ref{fig:pointclouds} consisting of three plots, each containing 500 simulations of $X^\uparrow_1$ for different values of $\rho$. 

\begin{figure}[H]
	\centering
	\begin{subfigure}[t]{.33\textwidth}
		\centering
		\includegraphics{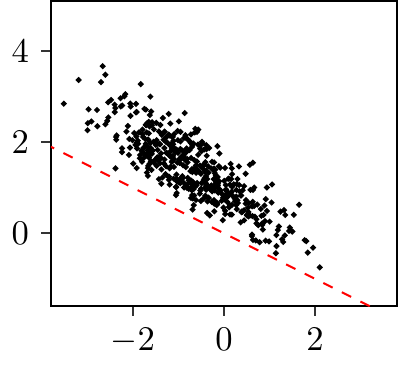}
		\captionsetup{width=.8\linewidth}
		\caption{$\rho=-0.8$.}
		\label{subfig:pointcloudsA}
	\end{subfigure}%
	\begin{subfigure}[t]{.33\textwidth}
		\centering
		\includegraphics{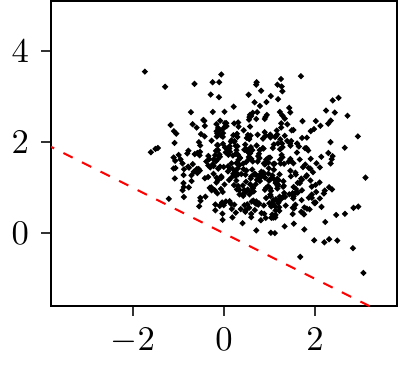}
		\captionsetup{width=.8\linewidth}
		\caption{$\rho=0$.}
		\label{subfig:pointcloudsB}
	\end{subfigure}%
	\begin{subfigure}[t]{.33\textwidth}
		\centering
		\includegraphics{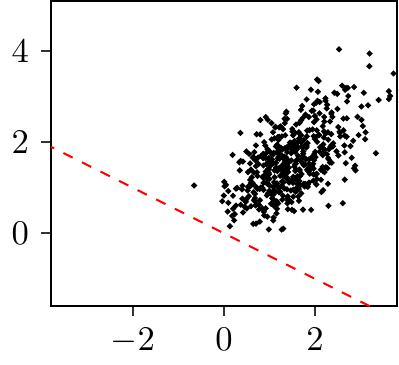}
		\captionsetup{width=.8\linewidth}
		\caption{$\rho=0.8$.}
		\label{subfig:pointcloudsC}
	\end{subfigure}\quad%
	\caption{Simulated values of $X^\uparrow_1$ for the direction $\eta=(1,2)^\top$ and for different values of $\rho$. The standard deviations are $\sigma_1=\sigma_2=1$.}
	\label{fig:pointclouds}
\end{figure}

\section{The law of the limit pair}\label{sec:law}
We need some additional notation.
Consider an extension of the probability space $(D,\mathcal F,\p)$ supporting a standard exponential random variable $e_1$ independent of everything else.
Define $e_q=e_1/q$, an exponential random variable of rate~$q>0$, and let $X:\p^{e_q}$ be the process $X:\p$ killed at~$e_q$.
Finally, the process $X:\p^{e_q}_x$ corresponds to the shifted process $X_t+x\ind{\Set{t\geq 0}}$ killed at~$e_q$, and in the case of no killing we write~$\p_x$.
\subsection{Exponential time horizon}
As in the univariate case the characterization of the law of the limit object $(-X^\downarrow,X^\uparrow)$ proceeds by first studying the respective pair of processes under $\p^{e_q}$,
that is, when the killing time of the original process is an independent exponential random variable of rate~$q>0$.
We start with a simple observation that under $\p^{e_q}$ (and Assumption~\eqref{as:notCPP}) we have
\[(\underleftarrow{X},\underrightarrow{X})\stackrel{d}{=}(\underrightarrow{-X},\underleftarrow{-X}),\]
which readily follows by time-reversal; alternatively one may use Theorem~\ref{thm:representation}. 
The following splitting result is based on some classical arguments, and we only provide appropriate references.

\begin{proposition}\label{prop:decomposition}
	Under $\p^{e_q}$ the processes $\underleftarrow{X}$ and $\underrightarrow{X}$ are independent Markov processes. The semigroup of $\underrightarrow{X}$ is given by
	\begin{equation*}
		\frac{\p^{e_q}_{x}(X_t\in\dd y,\underline Z_t>0)\p_{y}^{e_q}(\underline Z>0)}{\p^{e_q}_{x}(\underline Z>0)}=\p^{e_q}_x(X_t\in \dd y|\underline Z>0),\qquad t>0,x,y\in S.
	\end{equation*}
	Moreover, in case $(\downarrow)$ the initial distribution is given by
	\[\p^{e_q}(\underrightarrow X_0\in \dd y)=\p^{e_q}_y(\underline Z>0)\Pi(\dd y)/\Big(q+\int_{\Set{\inner{z}{\eta}>0}} \p^{e_q}_z(\underline Z>0)\Pi(\dd z)\Big),\quad y\in S.\]
\end{proposition}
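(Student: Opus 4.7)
The plan is to reduce each of the three claims to its well-known univariate analogue, exploiting the fact that the sign decomposition of the path is governed entirely by the projected process $Z$. For the independence and the Markov property I would invoke Theorem~\ref{thm:representation} to identify $(\underleftarrow X,\underrightarrow X):\p^{e_q}$ with $(-X^\downarrow,X^\uparrow)$, that is, with the time-changed integrals of the increments of $X$ over the disjoint index sets $\Set{s\given \inner{X_{s-}}{\eta}\leq 0}$ and $\Set{s\given \inner{X_{s-}}{\eta}>0}$, killed at the random times $A^-_{e_q}$ and $A^+_{e_q}$. Under the exponential killing, the classical Wiener--Hopf/splitting-at-the-minimum result (see \cite[Ch.~VI]{bertoin} and \cite{bertoin_splitting}) yields the independence of the two pieces and the Markov property of each; the multivariate setting introduces no new difficulty since $\tau$ and the sign partition are measurable functions of $Z$ alone.

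For the semigroup formula I would identify the stated kernel as a Doob $h$-transform, with
\[h(x):=\p_x^{e_q}(\underline Z>0)\]
taken with respect to the sub-Markov kernel $Q_t(x,\dd y):=\p_x^{e_q}(X_t\in \dd y,\ \underline Z_t>0)$ of $X$ killed at $e_q$ and upon exiting $S$. Excessivity of $h$ with respect to $Q$ is immediate from the Markov property applied at time~$t$, and the explicit semigroup then comes out of the standard $h$-transform construction. This is the multivariate lift of the argument of \cite{chaumont_doney}.

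For the initial distribution in case~$(\downarrow)$, the key point is that $Z$ is irregular at $0$ from above, so $X$ enters $S$ at $\tau$ by a single jump of size $\underrightarrow X_0$ from a position with $\inner{\cdot}{\eta}\leq 0$. I would apply the compensation formula to the Poisson point process of jumps of $X$: the intensity of jumps into $S$ landing at $y$ whose subsequent path stays in $S$ until $e_q$ equals $\p_y^{e_q}(\underline Z>0)\,\Pi(\dd y)$, while the total rate at which the current sojourn of $X$ in $\overline S^c$ terminates is the sum of the killing rate $q$ and the total surviving-jump rate $\int_{\Set{\inner{z}{\eta}>0}} \p_z^{e_q}(\underline Z>0)\,\Pi(\dd z)$. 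A standard geometric/renewal argument for the first surviving excursion then yields the claimed normalised law. The main obstacle here is executing the L\'evy-system manipulation cleanly and verifying that this denominator really represents the competing exit rate of the pre-$\tau$ excursion structure; the remaining ingredients are direct transfers of the univariate arguments in \cite{bertoin_splitting, chaumont_doney}.
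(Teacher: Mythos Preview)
Your treatment of the semigroup (as a Doob $h$-transform of the process killed on exiting $S$ and at~$e_q$) and of the initial distribution (via the compensation formula applied to the jumps of~$X$) is essentially what the paper does: it cites Millar's path decomposition for the former and an enumeration-of-jumps argument for the latter. Your phrasing of the denominator as the ``rate at which the sojourn in $\overline S^c$ terminates'' is slightly off---the relevant clock is the local time of $Z$ at its running infimum, not real time spent below zero---but the compensation-formula computation you describe gives the stated normalisation.

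The independence argument, however, has a genuine gap. Passing through Theorem~\ref{thm:representation} to rewrite the pair as $(-X^\downarrow,X^\uparrow)$ does not help: the two time-changed integrals are built from increments over \emph{random} index sets determined by the entire path of~$Z$, so the ``disjoint index sets'' picture does not by itself yield independence. Your fallback, that ``the multivariate setting introduces no new difficulty since $\tau$ and the sign partition are measurable functions of $Z$ alone'', is a non-sequitur: knowing that $\tau$ is $\sigma(Z)$-measurable and that the pre-/post-$\tau$ pieces of $Z$ are independent does \emph{not} imply that the pre-/post-$\tau$ pieces of the full $\R^d$-valued $X$ are independent. What is needed---and what the paper does---is to rerun the proof of \cite[Lem.~VI.6]{bertoin} directly for~$X$: discretise the local time of $Z$ at its infimum to get a sequence of $(\ff_t)$-stopping times exhausting~$\tau$, and apply the strong Markov property of the \emph{multivariate} process~$X$ at those times. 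The detour via Theorem~\ref{thm:representation} can be dropped entirely.
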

\begin{proof}
	The fact that $\underrightarrow{X}$ is Markov with the stated semigroup is proven in \cite{millar_decomposition}.
	Independence of the processes follows by discretizing the local time of $Z$ at its infimum as in the proof of \cite[Lem.~VI.6]{bertoin}. 
	The initial distribution in case $(\downarrow)$ can be obtained analogously to~\cite[Prop.\ 3.3]{iva_splitting} using an enumeration of jumps of~$X$.
\end{proof}

\subsection{Infinite time horizon}
We are now ready to characterize the law of the limit object $(-X^\downarrow,X^\uparrow)$.
Consider a so-called renewal function associated to the ladder height process $\underline H$ corresponding to~$-Z$: 
\[h(x):=\int_0^\infty \p(\underline H_t\leq x)\dd t,\]
where the scaling of local time is arbitrary, see also~\cite[p.\ 157, 171]{bertoin}.
This is exactly the $h$-function appearing in the Doob h-transform corresponding to the univariate $Z$ conditioned to stay positive, see~\cite{bertoin_splitting,chaumont_doney} for alternative representations. 
The function $h$ is finite, continuous and increasing. 

\begin{theorem}\label{thm:chararcterization}
The processes $-X^\downarrow$ and $X^\uparrow$ are independent Markov processes, and the former has the law of $(-X)^\uparrow$. The semigroup of $X^\uparrow$ is given by
	\begin{equation*}
		p^\uparrow_t(x,\dd y):=\frac{h(\inner{y}{\eta})}{h(\inner{x}{\eta})}\p_{x}(X_t\in\dd y,\underline Z_t>0),\qquad t>0,\,x,y\in S.
	\end{equation*}
	Furthermore, in case $(\downarrow)$ and if $Z$ is non-monotone we have
	\begin{equation}\label{eq:initial}\p(X^\uparrow_0\in \dd y)=h(\inner{y}{\eta})\Pi(\dd y)/\int_{\Set{\inner{z}{\eta}>0}} h(\inner{z}{\eta})\Pi(\dd z),\quad y\in S.\end{equation}
\end{theorem}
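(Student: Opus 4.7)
The plan is to pass $q\to 0$ in Proposition~\ref{prop:decomposition} and identify the limiting law with the one stated. The identity $X^\downarrow=-(-X)^\uparrow$ a.s.\ is already recorded in the discussion following Lemma~\ref{lem:continuity}, so $-X^\downarrow$ has the law of $(-X)^\uparrow$ and the only remaining task is to characterize $X^\uparrow$.

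First I would establish that, under $\p^{e_q}$ with $e_q$ independent of $X$, the pair $(\underleftarrow X,\underrightarrow X)$ converges in distribution to $(-X^\downarrow,X^\uparrow)$ as $q\to 0$. Since $e_q\to\infty$ a.s., this follows from Corollary~\ref{cor:object} by conditioning on $e_q$ and applying bounded convergence to the conditional expectations $\e^T[f(\underleftarrow X,\underrightarrow X)]$. Independence of $-X^\downarrow$ and $X^\uparrow$ is then inherited from the independence of $\underleftarrow X$ and $\underrightarrow X$ given by Proposition~\ref{prop:decomposition}, since the joint convergence of independent components forces the limit to be a product law.

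For the semigroup of $X^\uparrow$ I would compute the $q\to 0$ limit of the finite-dimensional distributions supplied by Proposition~\ref{prop:decomposition}. Iterating the Markov property gives, for $x,y_1,\dots,y_n\in S$ and $0<t_1<\dots<t_n$,
\[\p^{e_q}_x\bigl(X_{t_1}\in\dd y_1,\dots,X_{t_n}\in\dd y_n\mid \underline Z>0\bigr)=e^{-qt_n}\,\frac{\p_x(X_{t_1}\in\dd y_1,\dots,X_{t_n}\in\dd y_n,\underline Z_{t_n}>0)\,\p^{e_q}_{y_n}(\underline Z>0)}{\p^{e_q}_x(\underline Z>0)}.\]
The crucial univariate input is the renewal/Wiener--Hopf asymptotic
\[k(q)\,\p^{e_q}_x(\underline Z>0)\longrightarrow h(\inner{x}{\eta})\qquad\text{as }q\to 0,\]
for a suitable normalization $k(q)\downarrow 0$, which is obtained from the representation of $-\underline Z_{e_q}$ via the $q$-killed descending ladder height subordinator of $Z$ together with the definition of $h$. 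Consequently $\p^{e_q}_{y_n}(\underline Z>0)/\p^{e_q}_x(\underline Z>0)\to h(\inner{y_n}{\eta})/h(\inner{x}{\eta})$ and $e^{-qt_n}\to 1$, so the limit matches the finite-dimensional laws of the Doob $h$-transform with kernel $p^\uparrow_t$.

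For the entrance law in case $(\downarrow)$, I would apply the same limit to $\p^{e_q}(\underrightarrow X_0\in\dd y)$ from Proposition~\ref{prop:decomposition}: multiplying numerator and denominator by $k(q)$, the numerator tends to $h(\inner{y}{\eta})\Pi(\dd y)$ and the leading term of the denominator to $\int_{\Set{\inner{z}{\eta}>0}}h(\inner{z}{\eta})\Pi(\dd z)$, while the $q\,k(q)$ remnant vanishes; non-monotonicity of $Z$ guarantees that this integral is finite and positive, and~\eqref{eq:initial} is recovered. The main obstacle is the univariate asymptotic $k(q)\p^{e_q}_x(\underline Z>0)\to h(\inner{x}{\eta})$; once this is cited from the univariate conditioned-to-stay-positive theory, the remaining limit interchanges are routine via dominated convergence and the continuity and monotonicity of $h$.
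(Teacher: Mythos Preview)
Your proposal follows essentially the same route as the paper: pass $q\downarrow 0$ in Proposition~\ref{prop:decomposition}, invoke the univariate asymptotic $\p^{e_q}_x(\underline Z>0)/c_q\to h(\inner{x}{\eta})$ (the paper cites \cite[Eq.~(2.5)]{chaumont_doney} for this), and finish via dominated convergence. Two places where the paper is slightly more careful than your outline: first, the Markov property of the limit $X^\uparrow$ is not argued through weak convergence (which by itself does not preserve it), but through the pathwise coupling of Theorem~\ref{thm:representation}---under $\p^{e_q}$ the process $X^\uparrow$ is just the infinite-horizon $X^\uparrow$ killed at $A^+_{e_q}$---combined with Lemma~\ref{lem:continuity} to rule out atoms of $\zeta^\uparrow$; second, for the entrance law the paper does not assert finiteness of $\int_S h(\inner{z}{\eta})\Pi(\dd z)$ directly, but works on bounded Borel sets $A$ bounded away from $0$, applies dominated convergence there, and recovers the normalizing constant a posteriori from $\p(X^\uparrow_0\in S)=1$.
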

\begin{proof}
We apply Theorem~\ref{thm:representation} with $T=e_q$ and Proposition~\ref{prop:decomposition}, and then let $q\downarrow 0$.
Since $e_q\to \infty$ we indeed retrieve $-X^\downarrow$ and $X^\uparrow$.
The Markov property follows from the strong convergence result implied by Theorem~\ref{thm:representation} upon recalling that the distribution of $\zeta^\uparrow$ has no atoms, see Lemma~\ref{lem:continuity}.
Let us check that the semigroup in Proposition~\ref{prop:decomposition} has the stated weak limit.
From the univariate theory~\cite[Eq.~(2.5)]{chaumont_doney} we know that for a certain $c_q>0$
\[\p^{e_q}_x(\underline Z>0)/c_q\to h(\inner{x}{\eta}),\qquad x\in S\]
 as $q\downarrow 0$, and it is left to apply the dominated convergence theorem as in~\cite[Prop.\ 1]{chaumont_doney}.

With respect to the initial distribution (for the assumed case) we observe that 
\[\p^{e_q}(\underrightarrow X_0\in A)\to \p(X^\uparrow_0\in A),\qquad \int_A\p^{e_q}_y(\underline Z>0)/c_q\Pi(\dd y)\to \int_Ah(\inner{y}{\eta})\Pi(\dd y)<\infty\] 
for any bounded Borel set $A$, also bounded away from~0 (by the dominated convergence theorem). 
It is left to recall that   $\zeta^\uparrow>0$ and $X_0^\uparrow\in S$ a.s., and $A$ can be chosen so that $\int_Ah(\inner{y}{\eta})\Pi(\dd y)>0$.
The latter is true since $h(z)>0$ for $z>0$ and necessarily $\Pi(S)>0$.
\end{proof}

In the univariate case the initial distribution formula~\eqref{eq:initial} is known in the case of no negative jumps, where $\underline H(t)=t$ implying $h(x)=x$, see~\cite{chaumont_sur} and also \cite[Eq.\ (2.12)]{chaumont_doney}.
Let us also stress the following relation to the univariate conditioned processes. 
\begin{remark}
Choosing the direction $\eta_1=(1,0,\dotsc,0)^\top$ we observe that
\[p^\uparrow_t(x,\dd y)={p_t^{(1)}}^\uparrow (x^{(1)},\dd y^{(1)})\p_{x^{(2:d)}}(X^{(2:d)}_t\in \dd y^{(2:d)}|{X^{(1)}_t=y^{(1)},\underline X^{(1)}_t>0}),\]
where $X=(X^{(1)},X^{(2:d)})$ and ${p_t^{(1)}}^\uparrow$ corresponds to $X^{(1)}$ conditioned to stay positive.
\end{remark}

\section{Starting away from the origin}\label{sec:Feller}
Theorem~\ref{thm:chararcterization} characterizes the law of $X^\uparrow$ in case $(\downarrow)$, but otherwise it lacks convergence of the semigroup as $x\to 0$.
In this section we address this issue and also state a number of further useful properties.
The proofs follow closely the univariate analogues in~\cite{chaumont_doney} and thus we only state the main steps and observations.

It is easy to see that the semigroup $p^\uparrow_t(x,\dd y)$ of $X^\uparrow$, see Theorem~\ref{thm:chararcterization}, is conservative and satisfies the Feller properties on~$\hat S:=S\cup\Set{\dagger}$.
Note that the hyperplane defining this half-space has been excluded.
We write $X:P_{x}^\uparrow$ for the respective Feller process indexed by $[0,\infty)$ and started at $x\in \hat S$, and note that it satisfies the strong Markov property~\cite[Thm.\ 19.17]{kallenberg}.

Observe that the law of the Markov process with the semigroup in Proposition~\ref{prop:decomposition} when started in $x\in \hat S$ can be conveniently written as
\begin{equation}\label{eq:conditioning}
X|\underline Z>0=X|X_t\in \hat S\; \forall t\geq 0,\qquad\text{under }\p^{e_q}_x.
\end{equation}
Furthermore, \cite[Prop.\ 1]{chaumont_doney} readily generalizes to 
\begin{equation}\label{eq:limit}
P_x^\uparrow(\Lambda, t<\zeta)=\lim_{q\downarrow 0}\p_x^{e_q}(\Lambda, t<\zeta|\underline Z>0),\qquad \Lambda\in\mathcal F_t,t>0,x\in S,
\end{equation}
which explains the name `conditioned to stay in a half-space'.
Note that $\inner{X}{\eta}:P^\uparrow_x$ is the univariate process $\inner{X}{\eta}$ conditioned to stay positive and started from $\inner{x}{\eta}$.

\begin{proposition}\label{prop:markov_repr}
For any $x\in S$  the process $Z_t=\inner{X_t}{\eta}$ under $P_x^\uparrow$ has a unique and finite time of infimum, $\underrightarrow X$ and $\underleftarrow X$ are independent under $P^\uparrow_x$, and 
\[\underrightarrow X:P^\uparrow_x\stackrel{d}{=} X^\uparrow.\]
Furthermore,
	\[X:P^\uparrow_x \convd X^\uparrow,\qquad \text{ as }S\ni x\to 0,\]
	where by convention the sample paths satisfy $\omega_t=0,t<0$.
\end{proposition}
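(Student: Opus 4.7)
The plan is to follow the univariate template of Chaumont-Doney~\cite{chaumont_doney}, leveraging the factorisation of the semigroup noted in the remark after Theorem~\ref{thm:chararcterization}: under $P^\uparrow_x$ the projected process $Z=\inner{X}{\eta}$ is the univariate L\'evy process conditioned to stay positive started from $\inner{x}{\eta}>0$. Consequently the univariate theory under Assumption~\eqref{as:notCPP} immediately yields that $Z:P^\uparrow_x$ drifts to $+\infty$ and attains its infimum at a unique finite random time, so $\underrightarrow X$ and $\underleftarrow X$ under $P^\uparrow_x$ are well defined and the first claim is settled.

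To establish independence together with the equality $\underrightarrow X:P^\uparrow_x\stackrel{d}{=}X^\uparrow$, the idea is to approximate $P^\uparrow_x$ by the penalised laws $\p^{e_q}_x(\,\cdot\mid\underline Z>0)$ via~\eqref{eq:limit} and let $q\downarrow 0$. The Millar-style splitting underlying Proposition~\ref{prop:decomposition} extends verbatim from starting point $0$ to any $x\in S$, giving that under $\p^{e_q}_x$ the pair $(\underleftarrow X,\underrightarrow X)$ is independent with explicit semigroups. The event $\{\underline Z>0\}$ factorises as a pair of conditions, one on each half (both projected halves must stay positive on their respective time intervals), so conditioning preserves independence. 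Passing $q\downarrow 0$, I would invoke the univariate asymptotic $\p^{e_q}_y(\underline Z>0)/c_q\to h(\inner{y}{\eta})$ together with dominated convergence, exactly as in the proof of Theorem~\ref{thm:chararcterization}, to identify the limiting semigroup of $\underrightarrow X$ with $p^\uparrow_t$ and its initial law with that of $X^\uparrow_0$ in each of the three regimes of Lemma~\ref{lem:trichotomy}.

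For the weak convergence $X:P^\uparrow_x\convd X^\uparrow$ as $S\ni x\to 0$, the plan is to combine the Feller structure on $\hat S$ with the univariate analysis of~\cite[Thm.~2]{chaumont_doney}. Convergence of finite-dimensional distributions follows by a ratio/dominated-convergence argument applied to
\[
p^\uparrow_t(x,\dd y)=\frac{h(\inner{y}{\eta})}{h(\inner{x}{\eta})}\,\p_x(X_t\in \dd y,\underline Z_t>0),
\]
treated separately in the three cases of Lemma~\ref{lem:trichotomy}; tightness in $\D$ then upgrades this to path-space convergence via standard arguments.

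I expect the main obstacle to be handling the instant $t=0$ in the irregular cases, most notably case~$(\downarrow)$, where the limit $X^\uparrow$ has initial distribution~\eqref{eq:initial} supported in $S$ while $X:P^\uparrow_x$ starts at the converging point $x$. The idea is to show that as $x\to 0$ the conditioned process performs its first macroscopic excursion into $\{\inner{\cdot}{\eta}>0\}$ with a law converging to~\eqref{eq:initial}, exploiting the convention $\omega_t=0$ for $t<0$ so that the initial discontinuity is visible to the Skorokhod topology on $\D$.
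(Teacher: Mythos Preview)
Your approach to the first three claims is essentially the paper's, though your phrasing of the conditioning step is imprecise. Under $\p^{e_q}_x$ the Millar-type splitting gives independence of $(\underleftarrow X,\underrightarrow X)$ with $\underrightarrow X$ distributed as $\underrightarrow X:\p^{e_q}$ (that is, the same law as when starting from the origin). The event $\{\underline Z>0\}$ does \emph{not} factorise into two non-trivial conditions: since $\underrightarrow X$ lives in $S$ automatically, $\{\underline Z>0\}$ is measurable with respect to the pre-infimum part alone (given~$x$). Hence conditioning on it leaves the law of $\underrightarrow X$ untouched, which is the actual mechanism. The paper then invokes Theorem~\ref{thm:representation} and~\eqref{eq:limit} to identify the $q\downarrow 0$ limit of $\underrightarrow X:\p^{e_q}$ with $X^\uparrow$ directly, rather than re-deriving the semigroup and initial law as you propose.

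For the convergence $X:P^\uparrow_x\convd X^\uparrow$ your proposal diverges from the paper and, in fact, from the actual content of~\cite[Thm.~2]{chaumont_doney}. You propose FDD convergence via the explicit ratio formula plus a separate tightness argument; this forces you to control the $0/0$ form $h(\inner{x}{\eta})^{-1}\p_x(X_t\in\dd y,\underline Z_t>0)$ as $x\to 0$ in $d$ dimensions and to establish tightness from scratch. The paper---following Chaumont--Doney's own strategy---sidesteps both issues by exploiting the splitting just established: since $\underrightarrow X:P^\uparrow_x\stackrel{d}{=}X^\uparrow$ for \emph{every} $x\in S$, independently of $\underleftarrow X$, weak convergence reduces to showing that the pre-infimum piece $\underleftarrow X:P^\uparrow_x$ (its duration and spatial extent) becomes negligible as $x\to 0$. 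The univariate argument already controls the $\eta$-component and the time $\tau$; the only genuinely new multivariate ingredient is bounding the fluctuation perpendicular to~$\eta$, which the paper handles via the stopping time $\nu=\inf\{t:\norm{X_t-x}>\epsilon\}$ and the observation that $\p_x(\underline Z_\nu>0)\to 0$ in cases $(\updownarrow),(\downarrow)$. This route also dissolves your anticipated obstacle in case~$(\downarrow)$: the initial jump of $X^\uparrow$ arises naturally as $\underrightarrow X_0$, and no separate analysis of the entrance law is needed.
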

\begin{proof}
It follows from the calculations in~\cite[p.\ 956]{chaumont_doney} that the time of infimum is finite. 
Consider the process in~\eqref{eq:conditioning} and establish a splitting result analogous to Proposition~\ref{prop:decomposition}.
The post-infimum process has the law of $\underrightarrow X:\p^{e_q}$, and so we can apply \eqref{eq:limit} and Theorem~\ref{thm:representation} to get the first statement.

In view of the first part, it is only required to show that the pre-infimum process $\underleftarrow X:P^\uparrow_x$ becomes negligible in probability as $x\to 0$.
The arguments of~\cite[Thm.\ 2]{chaumont_doney} still apply, and we additionally show that the maximal fluctuation of the pre-limit process perpendicular to $\eta$ is negligible.
This can be done by considering the stopping time $\nu=\inf\Set{t\geq 0\given \norm{X_t-x}>\epsilon}$ and employing similar analysis based on the strong Markov property.
In case $(\updownarrow)$ and $(\downarrow)$ we then need to show that $\p_x(\underline Z_\nu>0)\to 0$, which is indeed true.
\end{proof}

The above proof, in fact, shows that 
\[X:P^\uparrow_x \convd x_0+X^\uparrow,\qquad \text{ as }S\ni x\to x_0,\inner{x_0}{\eta}=0.\]
In cases $(\updownarrow),(\uparrow)$ the process $x_0+X^\uparrow$ starts at $x_0$ and according to Lemma~\ref{lem:continuity} it does not jump at fixed times.
Hence in these cases we may extend our Feller process $X:P^\uparrow_x$ to the state space $\overline S\cup\Set{\dagger}$, the closed half-space with an absorbing state,
by setting \[X:P^\uparrow_{x_0}:=x_0+X^\uparrow\qquad\text{ for any }x_0\text{ with }\inner{x_0}{\eta}=0.\]
Note that this definition coincides with the result of the construction presented in Section~\ref{sec:representation} 
if we take $X$ started at $x_0$ and let $Y_t^+=-\int_{[-t,0]}\ind{\Set{\inner{\tilde X_{s-}}{\eta}\geq 0}}\idd\tilde X_s$ which yields an a.s.\ identical process in the original case.

\section{Conjecture: zooming in at the maximal distance from the origin}\label{sec:further_applications}

For a possible further application we turn our attention to the local behavior of a Lévy process at the time when it reaches the maximal distance from the origin.
Assuming finite life time, $\zeta\in(0,\infty)$, we let 
\[\tau:=\sup\Set{t\geq 0\given\norm{X_t}\vee\norm{X_{t-}}=\sup_{s\geq0}\norm{X_s}}\in[0,\zeta]\] be the (last) time when the Euclidean norm is maximal. Consider the respective position $M:=X_\tau$ if $\norm{X_\tau}\geq\norm{X_{\tau-}}$ and $M:=X_{\tau-}$ otherwise, and define the processes
\begin{equation*}
	\overrightarrow{X}_t:=\begin{cases}
		X_{\tau+t}-M &\text{if }0\leq t<\zeta-\tau \\
		\dagger &\text{if }t\geq\zeta-\tau,
	\end{cases}
	\qquad
	\overleftarrow{X}_t:=\begin{cases}
		X_{(\tau-t)-}-M &\text{if }0\leq t<\tau \\
		\dagger &\text{if }t\geq\tau.
	\end{cases}
\end{equation*}
Observe that the pair $(\overleftarrow{X},\overrightarrow{X})$ coincides with $(\underleftarrow{X},\underrightarrow{X})$ studied above for the (path-dependent) direction $\eta=-M$, see
also Figure~\ref{fig:circle} for a schematic illustration.
\begin{figure}[H]
\begin{centering}
\begin{tikzpicture}
\draw[dashed] (0,0) circle (2);
\filldraw (0,0) circle (1pt) node[anchor=west]{0};
\filldraw (-0.894,-1.789) circle (1pt)  node[anchor=north]{$M$};
\draw[dotted] (-0.894,-1.789) circle (15pt);
\draw[gray] (-0.894+2,-1.789-1) --(-0.894-2,-1.789+1);
\draw[thick] plot [smooth, tension=1] coordinates { (0,0) (-0.3,-0.3) (-1,1) (-0.2,0.3) (-0.9,-0.7) (-0.894,-1.789)};
\draw[thick] plot [smooth, tension=1] coordinates { (-0.894,-1.789) (-0.5,-1.5) (-0.3,-1.8) (-0.6,-1.7) (0.2,-1.5)};
\filldraw (0.2,-1.5) circle (1pt);
\draw[gray,->] (-0.894,-1.789)--(-0.447,-0.894) node[anchor=east]{$\eta$};
\end{tikzpicture}
\end{centering}
\caption{Schematic illustration of zooming in at the maximal distance.}
\label{fig:circle}
\end{figure}
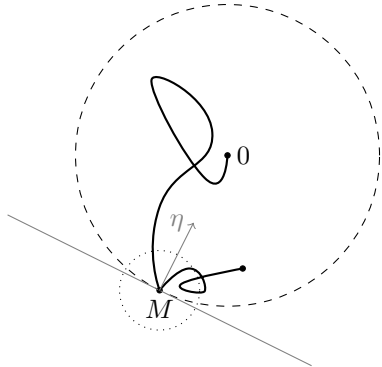
Inspired by Corollary~\ref{cor:zooming_at_inf} and using the intuition from the one-dimensional stable convergence in~\cite{iva_zooming}, we conjecture the following result; proving it seems to be exceedingly challenging at the moment. We anticipate that the convergence is again stable~\cite{aldous} but avoid complicating the statement. 

\begin{conjecture}\label{thm:zooming_max_norm}
	Let $B$ be the Brownian part of $X$ with a non-singular covariance matrix. Then
	\[\sqrt{n}(\overleftarrow{X}_{\cdot/n},\overrightarrow{X}_{\cdot/n}):\p^1\convd (-B^\Downarrow, B^\Uparrow), \] where the limit pair is a mixture of $(-B^{\downarrow},B^{\uparrow})$ for the independent direction $\eta=-M:\p^1$. 
\end{conjecture}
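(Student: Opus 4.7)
\emph{Proof plan.}
The overarching strategy is to exploit the observation preceding the conjecture, namely that the pair $(\overleftarrow X,\overrightarrow X)$ at the time of maximal norm coincides exactly with $(\underleftarrow X,\underrightarrow X)$ for the path-dependent direction $\eta=-M$ (the inner product $\inner{X_t}{M}$ is uniquely maximized precisely where $\norm{X_t}$ is, by Cauchy--Schwarz together with $X_\tau=M$). This reduces the conjecture to a \emph{stable} version of Corollary~\ref{cor:zooming_at_inf} along a random direction: conditional on $M=m\neq 0$, one would aim to show
\[\sqrt n\bigl(\underleftarrow X_{\cdot/n},\underrightarrow X_{\cdot/n}\bigr):\p^1\ \text{with}\ \eta=-m\;\convd\;(-B^{\downarrow},B^{\uparrow})\ \text{for direction}\ -m,\]
where $B^{\downarrow},B^{\uparrow}$ are the conditioned versions of the Brownian part of~$X$. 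The unconditional statement is then the mixture described in the conjecture, with $\sigma(M)$-stable convergence.

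The key technical step is a linearization of the norm constraint by its tangent half-space at~$M$. On the zoomed window one expects $\norm{X_{\tau+s}-M}=\Oh(1/\sqrt n)$, so the defining constraint $\norm{X_{\tau+s}}\leq\norm M$ rearranges to $2\inner{X_{\tau+s}-M}{-M}\geq\norm{X_{\tau+s}-M}^2=\Oh(1/n)$, whose right-hand side is negligible against the rescaled left-hand side of order $1/\sqrt n$. In the limit the constraint becomes the half-space condition $\inner{y}{-M}\geq 0$, and Corollary~\ref{cor:zooming_at_inf} along direction $-M$ should then deliver the claimed limit. To upgrade this to joint/stable convergence with respect to $\sigma(M)$, I would discretize the range of $M$ into small cells on $\R^d\setminus\Set{0}$ (using that the non-singular Brownian part ensures that $M$ has an absolutely continuous law), apply the fixed-direction invariance principle Theorem~\ref{thm:invariance} inside each cell, and then recombine by continuity of the conditioned Brownian law in the direction, which is explicit from Proposition~\ref{prop:BM}.

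The principal obstacle, and presumably the reason this remains a conjecture, is that $\tau$ is a last-passage time depending on the entire path, so the strong Markov property is unavailable and the conditional law of $X$ given $M$ is neither Lévy nor Markov; Theorem~\ref{thm:invariance} therefore cannot be invoked directly. A tempting but hard route would be to produce a Bertoin-type path decomposition analogue of Theorem~\ref{thm:representation} around the radial extremum, but the identity there relies on exchangeability of increments through a \emph{linear} functional, and norms are nonlinear, so a direct extension looks unlikely. A more plausible route is to bypass path decompositions and instead combine (i) the fact that only the Brownian part survives at scale $1/\sqrt n$, so that jumps can be truncated as in the proof of Corollary~\ref{cor:zooming_at_inf}, (ii) a stable functional CLT for $\sqrt n(X_{\tau+\cdot/n}-M)$ jointly with~$M$ in the spirit of~\cite{iva_zooming}, and (iii) the curvature control above to reduce the ball constraint to its tangent half-space; making (ii) rigorous at a last-passage time is where I expect the principal difficulty to lie.
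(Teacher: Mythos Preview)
The paper does not prove this statement: it is explicitly labelled a \emph{conjecture}, and the authors write that ``proving it seems to be exceedingly challenging at the moment.'' What follows the statement in the paper is not a proof but a simulation study comparing the empirical distribution of $\sqrt{n}\,\overrightarrow{X}_{1/n}$ with that of $B^\Uparrow_1$ for a correlated planar Brownian motion.

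Your proposal is therefore not to be compared against a proof in the paper, since none exists. That said, your write-up is an honest and accurate assessment of the situation rather than a proof: you correctly isolate the reduction to a directional problem with path-dependent $\eta=-M$, the tangent-half-space linearization of the ball constraint, and the need for a stable version of Corollary~\ref{cor:zooming_at_inf}. You also correctly identify the genuine gap, namely that $\tau$ is a last-passage time so that neither the strong Markov property nor the exchangeability machinery behind Theorem~\ref{thm:representation} is available, and hence Theorem~\ref{thm:invariance} cannot be applied conditionally on~$M$. This is precisely the obstruction the authors allude to. In short, your proposal does not close the gap, but it diagnoses it in the same way the paper does; there is no discrepancy to flag beyond the fact that the statement remains open.
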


We illustrate this conjecture by a simulation study where $X=B$ is a 2-dimensional Brownian motion with correlation $\rho=-0.8$ as in Section~\ref{sec:linear}. We simulate $K$ (approximate) copies of the random vector $\sqrt{n}\overrightarrow{X}_{1/n}$ under $\p^1$ for $n=1000$ using discretization with step size~$10^{-5}$. The $K$ samples of the limit quantity $B^\Uparrow_1$ are constructed by reusing the directions $\eta=-M:\p^1$ and then independently sampling $B^\uparrow_1$ according to Example~\ref{ex:BM}.

Note that $\overrightarrow{X}$ may have a lifetime strictly smaller than $1/n$, making $\sqrt{n}\overrightarrow{X}_{1/n}$ undefined. In our simulation we exclude these cases, effectively conditioning $\overrightarrow{X}$ to have a lifetime larger than $1/n$. We simulated 5000 times, resulting in $K=4833$ (conditional) samples of $\sqrt{n}\overrightarrow{X}_{1/n}$.
The respective bivariate densities are presented in Figure \ref{fig:zooming_at_max_norm_density}, and we observe that they are indeed rather close.

\begin{figure}[H]
	\adjustbox{center}{
	\clipbox{.3cm 1.5cm 0cm 1cm}{
		\includegraphics{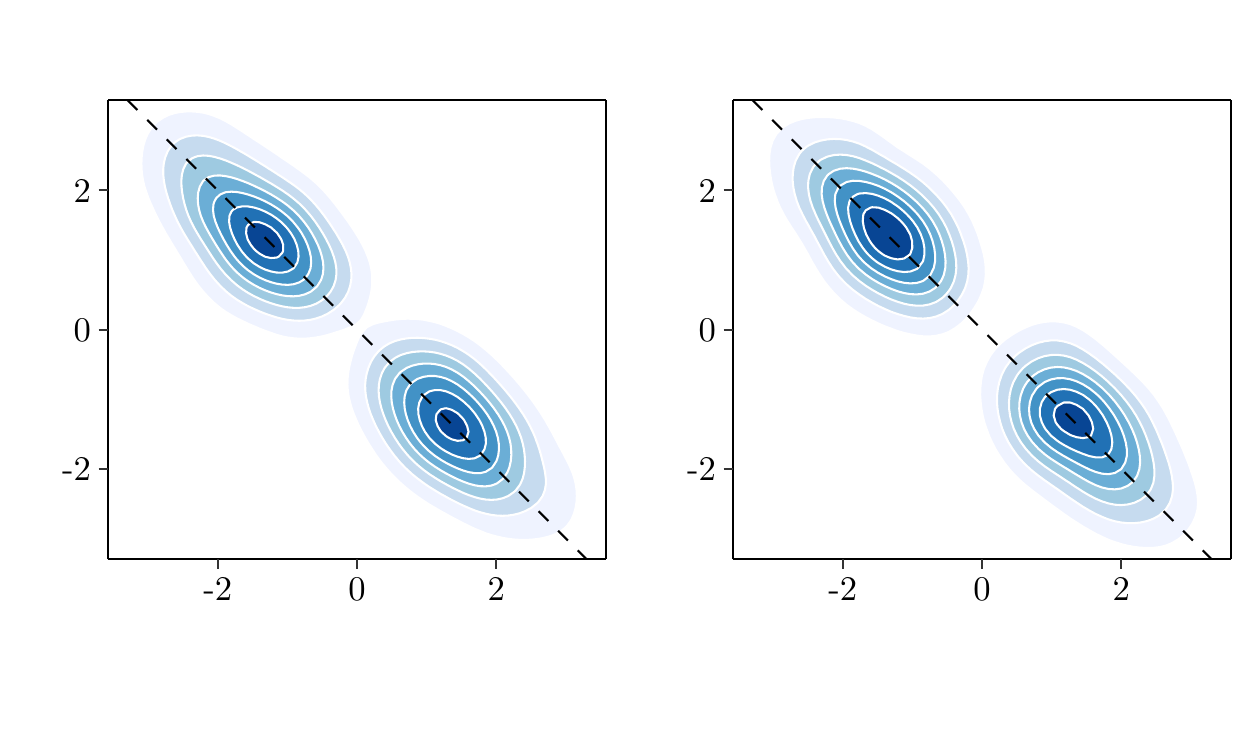}
	}
	}
	\caption{Estimated bivariate densities for $\sqrt{n}\protect\overrightarrow{X}_{1/n}$ and $B_1^\Uparrow$ on the left and right respectively. A darker shade of blue indicates a higher density. The dashed line is the line through the origin with slope $-1$.}
	\label{fig:zooming_at_max_norm_density}
\end{figure}

It is noted that the Brownian motion $X$ with correlation $\rho$ tends to achieve its maximal distance from the origin in the NW or SE direction, which leads to the two clusters of points in Figure~\ref{fig:zooming_at_max_norm_density}. 

\appendix
\section{Proof of Theorem \ref{thm:representation}}\label{sec:representation_proof}

\subsection{Discrete time}\label{subsec:representation_discrete}

We begin by stating a discrete-time version of Theorem \ref{thm:representation}. 
Fix $\zeta\in\N$ and consider a process $X\in\R^d$ over the index set $\{0,\ldots,\zeta\}$ together with the projected process $Z_i:=\inner{X_i}{\eta}$. 
Let $\tau:=\sup\Set{i\leq \zeta\given Z_i=\underline{Z}_i}$ be the index of the (last) minimum of $Z$, and $\underline X:=X_\tau$ be the value of the directional minimum. The directional post-minimum and reversed pre-minimum chains $\underrightarrow{X}$ and $\underleftarrow{X}$ are given by
\begin{equation*}
	\underrightarrow{X}_i:=\begin{cases}
		X_{\tau+i}-\underline X &\text{if }i\leq \zeta-\tau \\
		\dagger &\text{if }i>\zeta-\tau,
	\end{cases}
	\qquad
	\underleftarrow{X}_i:=\begin{cases}
		X_{\tau-i}-\underline X &\text{if }i\leq\tau \\
		\dagger &\text{if }i>\tau.
	\end{cases}
\end{equation*}

Next, define
\begin{equation}\label{eq:A_pm}
	A_i^+:=\sum_{j=1}^i\ind{\Set{Z_j>0}},\qquad A_i^-:=\sum_{j=1}^i\ind{\Set{Z_j\leq0}}
\end{equation}
when $i\leq \zeta$, and let $\alpha_i^\pm:=\inf\Set{j\in\N\given A_j^\pm=i}$ denote the inverses of $A^\pm$. With $\Delta X_j:=X_j-X_{j-1}$ we define the chains $X^\uparrow$ and $X^\downarrow$ by
\begin{equation*}
	X^\uparrow_i:=\begin{cases}
		\sum_{j=1}^{\alpha_i^+}\ind{\Set{Z_j>0}}\Delta X_j &\text{if }i\leq A_\zeta^+ \\
		\dagger & \text{if }i>A_\zeta^+,
	\end{cases}
	\qquad
	X^\downarrow_i:=\begin{cases}
		\sum_{j=1}^{\alpha_i^-}\ind{\Set{Z_j\leq0}}\Delta X_j &\text{if }i\leq A_\zeta^- \\
		\dagger & \text{if }i>A_\zeta^-.
	\end{cases}
\end{equation*}
We are now ready to state the discrete analogue of Theorem~\ref{thm:representation}.
\begin{theorem}\label{thm:representation_discrete}
Assume that $\zeta\in\N$ and $X$ has exchangeable increments.
	Then the pairs of processes $(X^\downarrow,X^\uparrow)$ and $(-\underleftarrow{X},\underrightarrow{X})$ have the same law.
\end{theorem}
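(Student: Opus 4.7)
The plan is to condition on the multiset of increments so as to exploit exchangeability, and then reduce Theorem~\ref{thm:representation_discrete} to a deterministic combinatorial bijection on permutations, in the spirit of Feller's classical identity~\cite[Lem.~XII.8.3]{feller} and Bertoin's one-dimensional construction~\cite{bertoin_splitting}.

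First I would set $\xi_i := X_i - X_{i-1}$ for $i = 1, \ldots, \zeta$. By exchangeability, the conditional law of the ordered tuple $(\xi_1, \ldots, \xi_\zeta)$ given the unordered multiset $V := \{\xi_1, \ldots, \xi_\zeta\}$ is uniform over all $\zeta!$ orderings. Both pairs $(-\underleftarrow X, \underrightarrow X)$ and $(X^\downarrow, X^\uparrow)$ are deterministic functions of this ordered tuple, so it suffices to show, for every fixed multiset $V$ of $\zeta$ vectors in $\R^d$, that the two associated maps (from orderings of $V$ to pairs of paths) have equal pushforward measures under the uniform distribution.

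The next step is to construct an explicit bijection $\phi$ on orderings of $V$ satisfying the pointwise identity
\[
(X^\downarrow, X^\uparrow)(\phi(\sigma)) = (-\underleftarrow X, \underrightarrow X)(\sigma).
\]
Such a $\phi$ is governed entirely by the sign pattern of the projected walk $Z^\sigma_j = \inner{X^\sigma_j}{\eta}$: it rearranges the ``non-positive'' and ``positive'' excursions of $Z^\sigma$ so that, in $\phi(\sigma)$, the directional minimum occurs precisely at the boundary between the block of non-positive excursions (whose reversed concatenation yields $-\underleftarrow X$) and the block of positive excursions (whose concatenation yields $\underrightarrow X$). Importantly, the rearrangement acts on position indices and the vector-valued increments are simply carried along; consequently the multivariate identity reduces to its one-dimensional counterpart applied to the projection $Z^\sigma$.

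The principal obstacle is the explicit construction of $\phi$ and the verification of bijectivity, which requires a careful combinatorial case analysis on the sign pattern of $Z^\sigma$. Care is needed to handle tie-breaking at the directional infimum---reflecting the convention that $\tau$ is the \emph{last} time the minimum is achieved---as well as the asymmetric treatment of strict and non-strict inequalities in the definitions of $X^\uparrow$ (where $Z_j > 0$) and $X^\downarrow$ (where $Z_j \leq 0$). Once the one-dimensional bijection is established, the lift to $d > 1$ is essentially cost-free, since the rearrangement only permutes positions and the full vector-valued increments can be tracked alongside.
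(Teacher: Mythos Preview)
Your proposal is correct and is precisely the approach the paper has in mind: the paper's own proof is the single sentence ``The proof of \cite[Thm.~2.1]{bertoin_splitting} is easily adapted to this setting,'' and your outline---condition on the multiset of increments, invoke the Feller-type bijection on orderings governed by the sign pattern of the projected walk, and carry the full vector increments along---is exactly how that adaptation goes. Your observation that the multivariate lift is cost-free because the bijection acts only on positions is the essential point that makes the one-dimensional argument extend.
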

\begin{proof}
	The proof of \cite[Thm.~2.1]{bertoin_splitting} is easily adapted to this setting.
\end{proof}

\subsection{Continuous time}

The proof of Theorem \ref{thm:representation} proceeds much like the proof of \cite[Thm.~3.1]{bertoin_splitting}. We discretize, apply Theorem \ref{thm:representation_discrete} and take the limit.

Recall that we are considering a Lévy process $X:\p^T$ up to a finite time horizon~$T>0$. For each $n\in\N$ let $X^n$ be the chain given by $X^n_i:=X_{i/n}$, and let $X^{n\uparrow}$ and $X^{n\downarrow}$ be the chains obtained from $X^n$ by the procedure in \S\ref{subsec:representation_discrete}. Define
\begin{equation*}
	Y^{n+}_i:=\sum_{j=1}^{i}\ind{\Set{\inner{X^n_j}{\eta}>0}}\Delta X^n_j,
\end{equation*}
and note that almost surely
\begin{equation*}
	Y^{n+}_{[tn]}=-\sum_{i=-[tn]}^{-1}\ind{\Set{\inner{\tilde X_{i/n}}{\eta}>0}}(\tilde X_{(i+1)/n}-\tilde X_{i/n}).
\end{equation*}
By \cite[Cor.~17.13]{kallenberg} we have
\begin{equation*}
	\sup_{0\leq t\leq T}\norm{Y^{n+}_{[tn]}-Y^+_t}\overset{\p}{\rightarrow}0.
\end{equation*}

Consider further the increasing chains $A^{n\pm}$ obtained from $X^n$ through the construction in \eqref{eq:A_pm}, and let $\alpha^{n\pm}$ be the inverses. Note that $\frac{1}{n}A^{n+}_{[tn]}\to A^+_t$ for all $t\geq0$ a.s.\ since the zero set of $\inner{X_\cdot}{\eta}$ is a Lebesgue null-set a.s. It follows that almost surely $\frac{1}{n}\alpha^{n+}_{[tn]}\to\alpha^+_t$ for all $t\in\cp(\alpha^+)$, where $\cp(\alpha^+)$ is the set of continuity points for $\alpha^+$. To see this, observe first that
\begin{equation*}
	\inf\Set{s\geq0\given\tfrac{1}{n}A^{n+}_{[sn]}>t}=\inf\Set{s\geq0\given A^{n+}_{[sn]}\geq[tn]+1}=\frac{1}{n}\alpha^{n+}_{[tn]+1}.
\end{equation*}
Almost surely the expression on the left converges to $\alpha^+_t$ for all $t\in\cp(\alpha^+)$. This basic convergence of right-continuous inverses is easy to  prove (e.g.\ using the arguments in the proof of \cite[Prop.~0.1]{resnick2013extreme}). Lastly one verifies that $\frac{1}{n}\alpha^{n+}_{[tn]+1}$ can indeed by replaced by $\frac{1}{n}\alpha^{n+}_{[tn]}$.

Let $f\colon[0,\infty)\to\R^d$ be a continuous function.
Then it follows from the observations above that
\begin{equation*}
	\frac{1}{n}\sum_{i=0}^{[Tn]}\inner{f(i/n)}{X^{n\uparrow}_i}\cip\int_0^T \inner{f(s)}{X^\uparrow_s}\idd s,
\end{equation*}
where we make the convention that $\inner{a}{\dagger}=\infty$ for $a\neq0$ and $\inner{0}{\dagger}=0$. To prove this we use the fact that $\alpha^+$ is strictly increasing and has at most countably many discontinuities, with the former implying that $Y^+$ jumps at $\alpha^+_t$ for at most countably many $t$.

Similarly, if $g\colon[0,\infty)\to\R^d$ is a continuous function we obtain the convergence
\begin{equation*}
	\frac{1}{n}\sum_{i=0}^{[Tn]}\inner{g(i/n)}{X^{n\downarrow}_i}\cip\int_0^T \inner{g(s)}{X^\downarrow_s}\idd s.
\end{equation*}

Using the fact that almost surely $(X_t)_{t\in[0,T]}$ reaches its infimum in the direction given by $\eta$ exactly once, it follows that almost surely
\begin{align*}
	\frac{1}{n}\sum_{i=0}^{[Tn]}\inner{f(i/n)}{\underrightarrow{X}^n_i}&\to\int_0^T\inner{f(s)}{\underrightarrow{X}_s}\idd s, \\
	\intertext{and}
	\frac{1}{n}\sum_{i=0}^{[Tn]}\inner{g(i/n)}{\underleftarrow{X}^n_i}&\to\int_0^T\inner{g(s)}{\underleftarrow{X}_s}\idd s,
\end{align*}
where $f$ and $g$ are as above. By Theorem \ref{thm:representation_discrete} we obtain the distributional identity
\begin{equation*}
	\left(\int_0^T \inner{g(s)}{X^\downarrow_s}\idd s,\int_0^T \inner{f(s)}{X^\uparrow_s}\idd s\right)\overset{d}{=}\left(-\int_0^T\inner{g(s)}{\underleftarrow{X}_s}\idd s,\int_0^T\inner{f(s)}{\underrightarrow{X}_s}\idd s\right)
\end{equation*}
under $\p^T$, thus proving Theorem \ref{thm:representation}. \qed

\section*{Acknowledgements}
The authors gratefully acknowledge financial support of Sapere Aude Starting Grant 8049-00021B ``Distributional Robustness in Assessment of Extreme Risk'' from Independent Research Fund Denmark.


\begin{thebibliography}{10}

\bibitem{aldous}
D.~J. Aldous and G.~K. Eagleson, \emph{On mixing and stability of limit
  theorems}, Ann. Probability \textbf{6} (1978), no.~2, 325--331. \MR{517416}

\bibitem{AGP}
S.~Asmussen, P.~Glynn, and J.~Pitman, \emph{Discretization error in simulation
  of one-dimensional reflecting {B}rownian motion}, Ann. Appl. Probab.
  \textbf{5} (1995), no.~4, 875--896. \MR{1384357}

\bibitem{asmussen2018discretization}
S.~Asmussen and J.~Ivanovs, \emph{Discretization error for a two-sided
  reflected {L}\'{e}vy process}, Queueing Syst. \textbf{89} (2018), no.~1-2,
  199--212. \MR{3803961}

\bibitem{bertoin_splitting}
J.~Bertoin, \emph{Splitting at the infimum and excursions in half-lines for
  random walks and {L}\'{e}vy processes}, Stochastic Process. Appl. \textbf{47}
  (1993), no.~1, 17--35. \MR{1232850}

\bibitem{bertoin}
J.~Bertoin, \emph{L\'{e}vy processes}, Cambridge Tracts in Mathematics, vol. 121,
  Cambridge University Press, Cambridge, 1996. \MR{1406564}

\bibitem{billingsley}
P.~Billingsley, \emph{Convergence of probability measures}, second ed., Wiley
  Series in Probability and Statistics: Probability and Statistics, John Wiley
  \& Sons, Inc., New York, 1999, A Wiley-Interscience Publication. \MR{1700749}

\bibitem{bis_iva}
K.~Bisewski and J.~Ivanovs, \emph{Zooming-in on a {L}\'{e}vy process: failure
  to observe threshold exceedance over a dense grid}, Electron. J. Probab.
  \textbf{25} (2020), Paper No. 113, 33. \MR{4161123}

\bibitem{chaumont_sur}
L.~Chaumont, \emph{Sur certains processus de {L}\'{e}vy conditionn\'{e}s \`a
  rester positifs}, Stochastics Stochastics Rep. \textbf{47} (1994), no.~1-2,
  1--20. \MR{1787140}

\bibitem{chaumont_doney}
L.~Chaumont and R.~A. Doney, \emph{On {L}\'{e}vy processes conditioned to stay
  positive}, Electron. J. Probab. \textbf{10} (2005), no. 28, 948--961.
  \MR{2164035}

\bibitem{chaumont_doney_inv}
L.~Chaumont and R.~A. Doney, \emph{Invariance principles for local times at the maximum of random
  walks and {L}\'{e}vy processes}, Ann. Probab. \textbf{38} (2010), no.~4,
  1368--1389. \MR{2663630}

\bibitem{doring_watson_weissmann}
L.~D\"{o}ring, A.~R. Watson, and P.~Weissmann, \emph{L\'{e}vy processes with
  finite variance conditioned to avoid an interval}, Electron. J. Probab.
  \textbf{24} (2019), Paper No. 55, 32. \MR{3968717}

\bibitem{doring_weissmann}
L.~D\"{o}ring and P.~Weissmann, \emph{Stable processes conditioned to hit an
  interval continuously from the outside}, Bernoulli \textbf{26} (2020), no.~2,
  980--1015. \MR{4058358}

\bibitem{duquesne}
T.~Duquesne, \emph{Path decompositions for real {L}evy processes}, Ann. Inst.
  H. Poincar\'{e} Probab. Statist. \textbf{39} (2003), no.~2, 339--370.
  \MR{1962781}

\bibitem{feller}
W.~Feller, \emph{An introduction to probability theory and its applications.
  {V}ol. {II}}, second ed., John Wiley \& Sons, Inc., New York-London-Sydney,
  1971. \MR{0270403}

\bibitem{iva_splitting}
J.~Ivanovs, \emph{Splitting and time reversal for {M}arkov additive processes},
  Stochastic Process. Appl. \textbf{127} (2017), no.~8, 2699--2724.
  \MR{3660888}

\bibitem{iva_zooming}
J.~Ivanovs, \emph{Zooming in on a {L}\'{e}vy process at its supremum}, Ann. Appl.
  Probab. \textbf{28} (2018), no.~2, 912--940. \MR{3784492}

\bibitem{iva_pod}
J.~Ivanovs and M.~Podolskij, \emph{Optimal estimation of some random quantities
  of a {L}\'{e}vy process}, arXiv preprint arXiv:2001.02517 (2020).

\bibitem{kallenberg}
O.~Kallenberg, \emph{Foundations of modern probability}, second ed.,
  Probability and its Applications (New York), Springer-Verlag, New York, 2002.
  \MR{1876169}

\bibitem{millar_decomposition}
P.~W. Millar, \emph{A path decomposition for {M}arkov processes}, Ann.
  Probability \textbf{6} (1978), no.~2, 345--348. \MR{461678}

\bibitem{resnick2013extreme}
S.~I. Resnick, \emph{Extreme values, regular variation, and point processes},
  Applied Probability. A Series of the Applied Probability Trust, vol.~4,
  Springer-Verlag, New York, 1987. \MR{900810}

\end{thebibliography}
\end{document}